\newtheorem{thm}{Theorem}[section]
\newtheorem{lem}[thm]{Lemma}
\newtheorem{prop}[thm]{Proposition}
\newtheorem{cor}[thm]{Corollary}
\theoremstyle{definition}
\newtheorem{rem}[thm]{Remark}
\numberwithin{equation}{thm}
\def\add{\operatorname{\mathsf{add}}}
\def\C{\mathbb{C}}
\def\cm{\operatorname{\mathsf{CM}}}
\def\im{\operatorname{Im}}
\def\lcm{\operatorname{\mathsf{\underline{CM}}}}
\def\cok{\operatorname{Cok}}
\def\depth{\operatorname{depth}}
\def\ds{\operatorname{\mathsf{D_{sg}}}}
\def\inc{\mathsf{inc}}
\def\ker{\operatorname{Ker}}
\def\mf{\operatorname{\mathsf{MF}}}
\def\mod{\operatorname{\mathsf{mod}}}
\def\P{\mathbf{P}}
\def\pd{\operatorname{pd}}
\def\radius{\operatorname{radius}}
\def\size{\operatorname{size}}
\def\syz{\Omega}
\def\t{{}^\mathrm{t}\!}
\def\X{\mathcal{X}}
\def\Y{\mathcal{Y}}
\def\Z{\mathcal{Z}}
\begin{document}
\allowdisplaybreaks
\title[On the radius of the category of extensions of matrix factorizations]{On the radius of the category of extensions\\of matrix factorizations}
\author{Kaori Shimada}
\address[K. Shimada]{Department of Mathematics, School of Science and Technology, Meiji University, 1-1-1 Higashi-mita, Tama-ku, Kawasaki 214-8571, Japan}
\email{k\_shimada@meiji.ac.jp}
\author{Ryo Takahashi}
\address[R. Takahashi]{Graduate School of Mathematics, Nagoya University, Furocho, Chikusaku, Nagoya 464-8602, Japan\,/\,Department of Mathematics, University of Kansas, Lawrence, KS 66045-7523, USA}
\email{takahashi@math.nagoya-u.ac.jp}
\urladdr{https://www.math.nagoya-u.ac.jp/~takahashi/}
\subjclass[2010]{13C14, 13C60, 13D09}
\keywords{dimension, hypersurface, matrix factorization, maximal Cohen--Macaulay module, radius, singularity category}
\thanks{Ryo Takahashi was partly supported by JSPS Grant-in-Aid for Scientific Research 16K05098, 19K03443 and JSPS Fund for the Promotion of Joint International Research 16KK0099}
\begin{abstract}
Let $S$ be a commutative noetherian ring.
The extensions of matrix factorizations of non-zerodivisors $x_1,\dots,x_n$ of $S$ form a full subcategory of finitely generated modules over the quotient ring $S/(x_1\cdots x_n)$.
In this paper, we investigate the radius (in the sense of Dao and Takahashi) of this full subcategory.
As an application, we obtain an upper bound of the dimension (in the sense of Rouquier) of the singularity category of a local hypersurface of dimension one, which refines a recent result of Kawasaki, Nakamura and Shimada.
\end{abstract}
\maketitle
\section{Introduction}

Rouquier \cite{R} has introduced the notion of the dimension of a triangulated category.
As an analogue for abelian categories, Dao and Takahashi \cite{radius,dim} have introduced the notions of the dimension and radius of a full subcategory of an abelian category with enough projective objects.
This paper studies the dimension and radius of a full subcategory of the category of finitely generated modules over a commutative noetherian ring, and the dimension of the singularity category of a commutative noetherian ring.

To explain our results more precisely, let $R$ be a commutative noetherian ring.
Denote by $\mod R$ the category of finitely generated $R$-modules, and by $\cm(R)$ the full subcategory of $\mod R$ consisting of maximal Cohen--Macaulay modules.
Kawasaki, Nakamura and Shimada \cite{KNS} have recently investigated the dimension of $\cm(R)$ in the case where $R$ is a certain hypersurface of dimension one.
The main purpose of this paper is to develop a further studies of this theorem.

Let $S$ be a commutative noetherian ring and $x\in S$.
Denote by $\mf(x)$ the full subcategory of $\mod S$ consisting of modules $M$ with $xM=0$ admitting an exact sequence of the form $0\to S^n\to S^m\to M\to0$.
Note that $\mf(x)$ is regarded as a full subcategory of $\mod S/(x)$.
In the case where $x$ is a non-zerodivisor, $\mf(x)$ coincides with the category of matrix factorizations of $x$ over $S$; see Proposition \ref{7}.

For ideals $I,J$ of $S$ and full subcategories $\X,\Y$ of $\mod S/I,\mod S/J$ respectively, we denote by $\X\ast\Y$ the full subcategory of $\mod S/IJ$ consisting of modules $M$ admitting an exact sequence $0\to X\to M\to Y\to0$ with $X\in\X$ and $Y\in\Y$.
The operation $-\ast-$ satisfies the associativity; see Proposition \ref{8}.

The main result of this paper is the following theorem.

\begin{thm}\label{0}
Let $S$ be a commutative noetherian ring and $x_1,\dots,x_n\in S$ non-zerodivisors.
Then
$$
\radius(\mf(x_1)\ast\cdots\ast\mf(x_n))\le\sup\{\dim\mf(x_1),\dots,\dim\mf(x_n)\}+1.
$$
\end{thm}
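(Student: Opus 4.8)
The plan is to build every $M\in\mf(x_1)\ast\cdots\ast\mf(x_n)$ out of a single module in $d+2$ steps, where $d=\sup\{\dim\mf(x_1),\dots,\dim\mf(x_n)\}$; since $\radius(\X)\le r$ exactly when $\X\subseteq[G]_{r+1}$ for some $G$, this yields the bound. Write $\Lambda:=S/(x_1\cdots x_n)$, so that the category in question is a subcategory of $\mod\Lambda$ and all balls $[\,\cdot\,]$ are taken in $\mod\Lambda$. By the associativity of $-\ast-$ (Proposition \ref{8}), such an $M$ carries a filtration $0=M_0\subseteq M_1\subseteq\cdots\subseteq M_n=M$ with $F_i:=M_i/M_{i-1}\in\mf(x_i)$; applying the horseshoe lemma to the defining presentations $0\to S^{a_i}\xrightarrow{\phi_i}S^{a_i}\to F_i\to0$ (with matrix-factorization partners $\phi_i\psi_i=\psi_i\phi_i=x_i$) exhibits $M$ as $\cok\Phi$ for a block upper triangular $\Phi$ with diagonal blocks $\phi_1,\dots,\phi_n$; one checks $(\Phi,\,x_1\cdots x_n\,\Phi^{-1})$ is a matrix factorization of $x_1\cdots x_n$, so $M\in\mf(x_1\cdots x_n)$ and the whole assembling of the $F_i$ is recorded by $\Phi$. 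Finally fix, for each $i$, a module $G_i\in\mod S/(x_i)$ with $\mf(x_i)\subseteq\langle G_i\rangle_{d+1}$, and set $G:=\bigoplus_iG_i$, viewed in $\mod\Lambda$.

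Next I would parallelize. Every short exact sequence of $S/(x_i)$-modules is one of $\Lambda$-modules, and the dimension-balls $\langle\,\cdot\,\rangle$ (built only from additive closure and extensions) are contained in the radius-balls $[\,\cdot\,]$ (which in addition absorb $\Lambda$ and are closed under syzygies); hence $\mf(x_i)\subseteq\langle G_i\rangle_{d+1}\subseteq[G]_{d+1}$ in $\mod\Lambda$ for every $i$. Because $[G]_{d+1}$ is closed under finite direct sums, the associated graded module $\bigoplus_iF_i$ already lies in $[G]_{d+1}$. This is the step that converts the ``$\sum_i$'' of a naive iterated-extension estimate into a single ``$\max_i$'': all $n$ factors are produced in the same $d+1$ rounds, in parallel, at the cost of enlarging the centre from $G_i$ to $\bigoplus_iG_i$.

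It then remains to glue the graded pieces back into $M$ at the cost of one extra step. For this I would pass to the syzygy over $\Lambda$: from $0\to\syz_\Lambda M\to\Lambda^a\to M\to0$ it suffices to place $\syz_\Lambda M$ in $[G']_{d+1}$ for a suitable enlargement $G'$ of $G$, since $\Lambda^a\in[G']_1$ and the balls are syzygy-closed (and $M$ is $2$-periodic, $\syz_\Lambda^2M\cong M$, so no symmetry is lost). The matrix-factorization partner of the block triangular $\Phi$ is again block triangular, with diagonal blocks $w_i\psi_i$ where $w_i:=\prod_{j\ne i}x_j$; a direct computation gives $\syz_\Lambda F_i\cong\cok(w_i\psi_i)$, and these are exactly the diagonal factors of $\syz_\Lambda M$. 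Each correction factor fits into $0\to(S/w_i)^{a_i}\to\cok(w_i\psi_i)\to\cok\psi_i\to0$ with $\cok\psi_i\in\mf(x_i)\subseteq[G]_{d+1}$ and $S/(w_i)\cong\Lambda/(w_i)$ lying in the strictly smaller product $\mf(x_{j_1})\ast\cdots\ast\mf(x_{j_{n-1}})$ (the $1\times1$ factorization $w_i=x_{j_1}\cdots x_{j_{n-1}}$ is itself block triangular), which is controlled by induction on $n$.

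The main obstacle is precisely this gluing: arranging the bookkeeping so that the correction terms reassemble $\syz_\Lambda M$ within a ball of radius $d+1$ --- and hence $M$ within radius $d+2$ --- with a bound that is genuinely independent of $n$. A crude iterated-extension argument would pay $+1$ for each of the $n$ layers; the whole point is that the syzygy-closure available to the radius (but not to the dimension) lets one replace the $n$-step filtration by a single extension of two parallel direct sums, so that the layers collapse into one. Making this collapse rigorous --- in particular showing that the modules $S/(w_i)$ and the change of rings $\Lambda\twoheadrightarrow S/(x_i)$ (under which syzygies and free covers must be compared) do not inflate the radius --- is where the real work lies, and is what pins the estimate at exactly $\sup_i\dim\mf(x_i)+1$.
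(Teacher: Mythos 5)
Your overall shape is right---filter $M$ with graded pieces in $\mf(x_i)$, put all the pieces into a common ball ``in parallel'' so the bound is a max rather than a sum, and pay one extra step to glue---but the two steps you lean on hardest are respectively false as stated and explicitly left undone, and they are exactly where the paper's work lies. First, the claim that $\mf(x_i)\subseteq[G]_{d+1}$ \emph{computed in} $\mod\Lambda$ does not follow from $\dim\mf(x_i)\le d$. That hypothesis gives $\mf(x_i)=[G_i]_{d+1}^{S/(x_i)}$, where the ball is built using syzygies over $S/(x_i)$ and the module $S/(x_i)$ itself; neither transfers to $\Lambda$, since $\syz_{S/(x_i)}X\ne\syz_\Lambda X$ and $S/(x_i)$ is not $\Lambda$-projective. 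Your remark that extension-only balls $\langle\cdot\rangle$ pass to $\Lambda$ is correct, but the hypothesis hands you a $[\cdot]$-ball, not a $\langle\cdot\rangle$-ball, so the inclusion you write goes the wrong way. The paper repairs this in two stages: Lemma \ref{5} uses the $2$-periodicity $\syz_{S/(x_i)}^2M\cong M$ to convert $[G_i]_{d+1}^{S/(x_i)}$ into a pure extension-ball $|K|_{d+1}^{S/(x_i)}$ (which does transfer), and Lemma \ref{4}(3) shows that the containment survives only after twisting the modules, $\cok A_i\mapsto\cok(x_1\cdots x_{i-1}A_i)$, and enlarging the generator by the matrix-factorization partner $\cok(x_1\cdots x_{i-1}Q_i)$ and $S/(x_1\cdots x_{i-1})$. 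Correspondingly, the left-hand term of the gluing sequence is $\bigoplus_i\cok(x_1\cdots x_{i-1}A_i)$, not the associated graded $\bigoplus_iM_i/M_{i-1}$ that you propose to control.

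Second, the collapse of the $n$-step filtration into a \emph{single} short exact sequence
$$
0 \to \textstyle\bigoplus_{i=1}^n\cok(x_1\cdots x_{i-1}A_i) \to M\oplus(S/(x_1\cdots x_n))^p \to \textstyle\bigoplus_{i=2}^n(S/(x_i\cdots x_n))^{p_i} \to 0
$$
is the content of Lemma \ref{3}, proved by a long explicit sequence of matrix equivalences followed by the snake lemma. You correctly identify this as ``where the real work lies'' and observe that a naive iteration costs $+1$ per layer, but your sketch (pass to $\syz_\Lambda M$, note its presentation is again block triangular with diagonal blocks $w_i\psi_i$) only reproduces another $n$-step filtration; it does not by itself produce the two-term extension needed to land in $[\,\cdot\,]_{d+2}$. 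Since both the ring-change/twisting mechanism and the one-shot reassembly are missing or misstated, the proposal as written does not establish the theorem.
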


For a noetherian ring $R$ we denote by $\ds(R)$ the singularity category of $R$, i.e., the Verdier quotient of the bounded derived category of $\mod R$ by perfect complexes.
The above theorem yields the following corollary, which gives rise to an inequality of the dimensions of the singularity categories of $1$-dimensional hypersurfaces.
This corollary refines a recent result of Kawasaki, Nakamura and Shimada \cite[Theorem 4.5]{KNS}, which assumes that the elements $x_1,\dots,x_n$ are powers of distinct prime elements and that the local ring $S$ is complete.

\begin{cor}\label{1}
Let $S$ be a regular local ring of dimension two and $x_1,\dots,x_n\in S$.
Then one has
$$
\dim\ds(S/(x_1\cdots x_n))\le\sup_{1\le i\le n}\{\dim\ds(S/(x_i))\}+1.
$$
In particular, if $S/(x_i)$ has finite CM-representation type for $1\le i\le n$, then $\dim\ds(S/(x_1\cdots x_n))\le1$.
\end{cor}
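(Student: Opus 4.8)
The plan is to reduce the statement to Theorem \ref{0} by means of the classical dictionary for hypersurfaces, together with one new observation identifying $\cm(S/(x_1\cdots x_n))$ with the star product $\mf(x_1)\ast\cdots\ast\mf(x_n)$. First I would dispose of the degenerate cases. A regular local ring is a domain, so each nonzero $x_i$ is automatically a non-zerodivisor; if some $x_i$ vanishes or is a unit the asserted inequality is checked directly (the corresponding singularity category is zero, or the factor drops out of the product), and I may assume that every $x_i$ is a nonzero non-unit. Writing $R=S/(x_1\cdots x_n)$ and $R_i=S/(x_i)$, each of these rings is then the quotient of the two-dimensional regular local ring $S$ by a single non-zerodivisor, hence a one-dimensional hypersurface.

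Next I would invoke the hypersurface dictionary. By Buchweitz's theorem there are triangle equivalences $\ds(R)\simeq\lcm(R)$ and $\ds(R_i)\simeq\lcm(R_i)$, and by Eisenbud's matrix factorization theorem together with Proposition \ref{7} the subcategory $\mf(x_i)$ is exactly $\cm(R_i)$. Since $\cm$ is a Frobenius exact category whose projective-injectives are the free modules, short exact sequences pass to triangles in the stable category $\lcm$ and the syzygy functor $\syz$ plays the role of the inverse suspension; moreover $\syz$ is two-periodic over a hypersurface. Transporting the Dao--Takahashi ball construction along this correspondence---and remembering that free modules become zero in $\ds$---identifies the dimension of a subcategory of $\cm$ with the Rouquier dimension of its image in $\ds$. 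In particular $\dim\mf(x_i)=\dim\ds(R_i)$ and $\dim\cm(R)=\dim\ds(R)$.

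The new input is the identity
$$\cm(R)=\mf(x_1)\ast\cdots\ast\mf(x_n),$$
which I would prove as follows. The inclusion $\supseteq$ holds because each $\mf(x_i)$ consists of depth-one $R$-modules and an extension of depth-one modules over the one-dimensional ring $R$ is again depth one, hence maximal Cohen--Macaulay. For $\subseteq$ I argue by induction on $n$, the case $n=1$ being the matrix factorization theorem. Given $M\in\cm(R)$, the multiplication map $M\xrightarrow{x_1}x_1M$ yields a short exact sequence
$$0\to(0:_Mx_1)\to M\to x_1M\to0.$$
Both $(0:_Mx_1)$ and $x_1M$ are submodules of the depth-one module $M$, so each has depth at least one; since $(0:_Mx_1)$ is killed by $x_1$ and $x_1M$ is killed by $x_2\cdots x_n$, the first is maximal Cohen--Macaulay over $R_1$ and thus lies in $\mf(x_1)$, while the second is maximal Cohen--Macaulay over $S/(x_2\cdots x_n)$ and thus, by the induction hypothesis, lies in $\mf(x_2)\ast\cdots\ast\mf(x_n)$. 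Associativity of $-\ast-$ (Proposition \ref{8}) then shows $M\in\mf(x_1)\ast\cdots\ast\mf(x_n)$.

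Combining these ingredients with the general inequality $\dim\le\radius$ and Theorem \ref{0} gives
\begin{align*}
\dim\ds(R)=\dim\cm(R)&=\dim(\mf(x_1)\ast\cdots\ast\mf(x_n))\\
&\le\radius(\mf(x_1)\ast\cdots\ast\mf(x_n))\\
&\le\sup_{1\le i\le n}\dim\mf(x_i)+1=\sup_{1\le i\le n}\dim\ds(R_i)+1,
\end{align*}
which is the desired bound; and if each $R_i$ has finite CM-representation type then $\ds(R_i)\simeq\lcm(R_i)$ has only finitely many indecomposable objects, whose direct sum generates it in a single step, so $\dim\ds(R_i)=0$ and the right-hand side equals $1$. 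The step I expect to be the main obstacle is the dictionary of the second paragraph: matching the Dao--Takahashi dimension of a subcategory of $\cm$ with the Rouquier dimension of $\ds$ requires careful bookkeeping of free summands (which are killed in $\ds$ but must be restored when lifting a triangulated generator to an exact one) and a faithful comparison of the ball construction with Rouquier's thickening under $\syz$. By contrast the identity of the third paragraph is clean once one peels $M$ by the annihilator $(0:_Mx_1)$ rather than by $x_1M$, since the naive quotient $M/x_1M$ need not be Cohen--Macaulay.
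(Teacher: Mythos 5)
Your argument follows essentially the same route as the paper's: identify $\cm(S/(x_1\cdots x_n))$ with $\mf(x_1)\ast\cdots\ast\mf(x_n)$ via an annihilator filtration, then apply Theorem \ref{0} together with the hypersurface dictionary. Your recursive peeling by $0\to(0:_Mx_1)\to M\xrightarrow{x_1}x_1M\to0$ is exactly the paper's filtration $M_i=(0:_Mx_1\cdots x_i)$ unwound by induction, with the same depth argument (positive depth of submodules of $M$ over the one-dimensional ring), and Proposition \ref{7}(5) supplies $\cm(S/(x_i))=\mf(x_i)$ as you say.

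The one genuine error is the appeal to ``the general inequality $\dim\le\radius$.'' With the paper's definitions, $\radius\X$ asks only for a containment $\X\subseteq{[G]}_{n+1}$ while $\dim\X$ asks for equality, so the inequality that holds in general is the opposite one, $\radius\X\le\dim\X$. The direction you actually need, $\dim\cm(R)\le\radius\cm(R)$ --- so that Theorem \ref{0}'s bound on the radius controls the dimension of the singularity category --- is not formal: it is a special feature of $\cm(R)$ over a local hypersurface, resting on Buchweitz's equivalence $\ds(R)\simeq\lcm(R)$ and the $2$-periodicity of syzygies, and it is exactly what \cite[Proposition 3.5(3)]{dim} provides and what the paper packages as Lemma \ref{6} in the form $\dim\ds(R)=\dim\lcm(R)=\dim\cm(R)=\radius\cm(R)$. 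Your second paragraph correctly flags this dictionary as the delicate point, but the final display then silently replaces it with a false general inequality; invoking Lemma \ref{6} (or the cited result directly) repairs the step. The remainder, including the finite CM-representation type case, matches the paper.
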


Here we introduce a set of polynomials over $\C$:
$$
\P=\{x,y,x^2+y^{m+1},x^2y+y^{n-1},x^3+y^4,x^3+xy^3,x^3+y^5\mid m\ge1,n\ge4\}.
$$
The inequality of dimensions of singularity categories given in the above result implies the following.

\begin{cor}\label{2}
Let $f_1,\dots,f_r\in\P$ and $R=\C[\![x,y]\!]/(f_1\cdots f_r)$.
Then one has $\dim\ds(R)\le1$.
Moreover, $\dim\ds(R)=1$ if and only if $R$ is not isomorphic to $\C[\![x,y]\!]/(f)$ for all $f\in\P$.
\end{cor}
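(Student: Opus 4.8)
The plan is to read off both assertions from Corollary~\ref{1} together with the classification of rings of finite CM-representation type. First I would observe that each $f\in\P$ gives a ring of finite CM-representation type: the factors $x$ and $y$ yield the regular rings $\C[\![y]\!]$ and $\C[\![x]\!]$, while $x^2+y^{m+1}$, $x^2y+y^{n-1}$, $x^3+y^4$, $x^3+xy^3$ and $x^3+y^5$ are precisely the normal forms of the simple plane curve singularities of types $A_m$, $D_n$, $E_6$, $E_7$, $E_8$, which are of finite CM-representation type by the theorem of Buchweitz, Greuel and Schreyer. Setting $R=\C[\![x,y]\!]/(f_1\cdots f_r)$ and applying the ``in particular'' part of Corollary~\ref{1} with $S=\C[\![x,y]\!]$ and $x_i=f_i$ then gives $\dim\ds(R)\le1$.

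To locate the equality I would isolate the auxiliary equivalence
\[
\dim\ds(R)=0\iff R\ \text{is of finite CM-representation type}
\]
valid for an arbitrary complete local hypersurface $R$. For one direction, if $R$ has finite CM-representation type then the direct sum $G$ of representatives of all indecomposable objects of $\ds(R)\simeq\lcm(R)$ satisfies $\ds(R)=\add G$, so no mapping cone is needed and $\dim\ds(R)=0$. Conversely, if $\dim\ds(R)=0$ then $\ds(R)=\add\{G[i]\mid i\in\ZZ\}$ for a single $G$; since a hypersurface has $2$-periodic matrix factorizations, the suspension satisfies $\syz^2\cong\mathrm{id}$, hence $[2]\cong\mathrm{id}$ and the family $\{G[i]\}$ meets only finitely many isomorphism classes of indecomposables. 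As $R$ is complete, $\ds(R)$ is Krull--Schmidt, so it has only finitely many indecomposable objects; that is, $R$ is of finite CM-representation type.

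It then remains to combine these inputs. By Buchweitz--Greuel--Schreyer, together with Auslander's theorem that finite CM-representation type forces an isolated singularity, a complete plane curve over $\C$ is of finite CM-representation type exactly when it is regular or a simple singularity, i.e. exactly when $R\cong\C[\![x,y]\!]/(f)$ for some $f\in\P$. Chaining this with the displayed equivalence yields $\dim\ds(R)=0$ if and only if $R\cong\C[\![x,y]\!]/(f)$ for some $f\in\P$; since $\dim\ds(R)\le1$ always holds, the only other possible value is $1$, and this gives the claimed characterization of when $\dim\ds(R)=1$.

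I expect the displayed equivalence to be the main obstacle, and in particular its converse: one has to argue that a single object can generate $\ds(R)$ without cones only when there are finitely many indecomposables, for which the $2$-periodicity of the suspension and the Krull--Schmidt property are essential. A related point of care is that the $f_i$ may coincide, so that $f_1\cdots f_r$ is non-reduced; one must therefore apply the classification in the sharp form guaranteeing that non-isolated---hence in particular non-reduced---plane curves have infinite CM-representation type, so that such $R$ indeed fall into the case $\dim\ds(R)=1$.
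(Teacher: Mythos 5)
Your proposal is correct, and its overall skeleton is the same as the paper's: deduce $\dim\ds(R)\le1$ from the ``in particular'' part of Corollary \ref{1} once each $S/(f)$ with $f\in\P$ is known to have finite CM-representation type, and then settle the second assertion by combining the classification of plane curves of finite CM-representation type (your Buchweitz--Greuel--Schreyer citation and the paper's reference to Yoshino's book give the same statement: finite type iff $f\in\P$ up to change of variables) with the equivalence $\dim\ds(R)=0\iff R$ has finite CM-representation type. The one place where you genuinely diverge is in how this last equivalence is obtained. The paper routes it through Lemma \ref{6} ($\dim\ds(R)=\dim\cm(R)$ for a local hypersurface) and then quotes Dao--Takahashi's result that $\dim\cm(R)=0$ iff $R$ has finite CM-representation type over a henselian ring. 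You instead prove it directly inside the triangulated category: the forward direction via $\ds(R)=\add G$ for $G$ the sum of the indecomposables, and the converse via the $2$-periodicity $[2]\cong\mathrm{id}$ on $\ds(R)\simeq\lcm(R)$ for a hypersurface together with the Krull--Schmidt property of $\lcm(R)$ over the complete ring $R$, which confines the indecomposables to the summands of $G\oplus G[1]$. This argument is sound and buys you a self-contained proof of the step the paper outsources to a citation, at the cost of invoking the matrix-factorization periodicity explicitly; the paper's version is shorter but leans on two external results. Your remark about the non-reduced case (repeated $f_i$) is a point the paper leaves implicit in the ``if and only if'' of the classification, and it is handled correctly in your write-up.
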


Proofs of the three results stated above are given in the next section.

\section{Proofs of our results}

Throughout the section, let $R$ and $S$ be commutative noetherian rings.
We assume that all modules are finitely generated, and all subcategories are full.
We denote by $E$ (resp. $E_n$) an identity matrix of some size (resp. the identity matrix of size $n$).

Let $A$ be an $m\times n$ matrix over $S$.
We define $\ker A$, $\im A$ and $\cok A$ by the kernel, image and cokernel of the linear map $A:S^n\to S^m$.
We call $A$ a {\em presentation matrix} of an $S$-module $M$ if $\cok A\cong M$.
For an $R$-module $M$ and an integer $n\ge0$ we denote by $\syz^nM$ (or $\syz_R^nM$) the {\em $n$th syzygy} of $M$, that is, the image of the $n$th differential map in a projective resolution of $M$.
This is uniquely determined up to projective summands.
We investigate the category of matrix factorizations of a non-zerodivisor.

\begin{prop}\label{7}
Let $x\in S$ be an $S$-regular element.
\begin{enumerate}[\rm(1)]
\item
Let $A,B$ be $n\times n$ matrices over $S$ such that $AB=BA=xE$.
Then $\ker A=0$ and $\cok A\in\mf(x)$.
\item
Let $M\in\mf(x)$.
Then there exist square matrices $A,B$ over $S$ with $AB=BA=xE$ and $\cok A\cong M$.
\item
Let $A,B$ be $n\times n$ matrices over $S$ with $AB=BA=xE_n$, and set $M=\cok A$.
Then the sequence
$$
\cdots\xrightarrow{A}(S/(x))^n\xrightarrow{B}(S/(x))^n\xrightarrow{A}(S/(x))^n\xrightarrow{B}\cdots
$$
and its $S/(x)$-dual are both exact sequences.
In particular, $\syz_{S/(x)}^2M\cong M$.
\item
If $S$ is a Cohen--Macaulay local ring, then $\mf(x)\subseteq\cm(S/(x))$.
\item
If $S$ is a regular local ring, then $\mf(x)=\cm(S/(x))$.
\end{enumerate}
\end{prop}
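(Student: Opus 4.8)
The plan is to establish the five parts in order, exploiting throughout that $x$ being $S$-regular controls both the injectivity of the structure maps and the relevant depths. For (1), I would first show $\ker A=0$: if $Av=0$ then $xv=BAv=0$, and since $x$ is a non-zerodivisor this forces $v=0$; the symmetric argument gives $\ker B=0$, a fact I will reuse. For the cokernel, note $xS^n=BAS^n\subseteq\im A$, so $x$ annihilates $\cok A$, and combined with the exact sequence $0\to S^n\xrightarrow{A}S^n\to\cok A\to0$ coming from $\ker A=0$ this places $\cok A$ in $\mf(x)$. For (2), the first task is to show a module $M\in\mf(x)$ has a \emph{square} presentation. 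Starting from the defining sequence $0\to S^n\xrightarrow{A}S^m\to M\to0$, I would localize at a minimal prime $\mathfrak{p}$ of $S$: since $x$ is a non-zerodivisor it avoids every $\mathfrak{p}\in\operatorname{Ass}(S)$, hence $x$ is a unit in $S_{\mathfrak{p}}$ and $M_{\mathfrak{p}}=0$, so the localized sequence shows $A_{\mathfrak{p}}$ is an isomorphism of free modules, forcing $m=n$ by a length count over the Artinian local ring $S_{\mathfrak{p}}$. With $A$ square and injective and $xM=0$, each $xe_i$ lies in $\im A$, giving $xE=AB$ for some $B$; the identity $BA=xE$ then follows by cancelling the injective $A$ on the left in $A(BA)=(AB)A=xA=A(xE)$.

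For (3), set $\bar S=S/(x)$. The relations $\bar A\bar B=\bar B\bar A=\overline{xE}=0$ show the displayed sequence is a complex, so exactness reduces to the reverse inclusions $\ker\bar A\subseteq\im\bar B$ and $\ker\bar B\subseteq\im\bar A$. If $\bar A\bar v=0$ then $Av\in xS^n=ABS^n$, say $Av=ABw$, and injectivity of $A$ over $S$ gives $v=Bw$, whence $\bar v\in\im\bar B$; the other inclusion is symmetric. The dual sequence is handled identically once one observes that the transposes satisfy $\t A\t B=\t B\t A=xE$, so they too form a matrix factorization. Finally I would read off from the resolution that $\syz_{\bar S}^2M=\im\bar B=\ker\bar A$ and identify $\im\bar B\cong\bar S^n/\ker\bar B=\bar S^n/\im\bar A=\cok\bar A=M$, yielding the claimed periodicity.

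For (4), assume $S$ is Cohen--Macaulay local, so that $\bar S$ is Cohen--Macaulay of dimension $\dim S-1$. Applying the depth lemma to $0\to S^n\xrightarrow{A}S^n\to M\to0$ gives $\depth_S M\ge\depth S-1=\dim\bar S$, and since depth over $S$ and over $\bar S$ agree for the $\bar S$-module $M$, the inequalities $\dim\bar S\le\depth_{\bar S}M\le\dim_{\bar S}M\le\dim\bar S$ force $M$ to be maximal Cohen--Macaulay over $\bar S$. For (5), with $S$ regular the inclusion $\mf(x)\subseteq\cm(S/(x))$ is exactly (4); conversely, for a nonzero $M\in\cm(\bar S)$ I would compute $\depth_S M=\dim S-1$ as above and invoke the Auslander--Buchsbaum formula over the regular ring $S$ to get $\pd_S M=1$, so a length-one free resolution $0\to S^n\to S^m\to M\to0$ exists; together with $xM=0$ this is precisely the condition $M\in\mf(x)$.

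The step I expect to be the main obstacle is the squareness claim in (2): proving $m=n$ without assuming $S$ is a domain cannot rest on a naive rank argument and instead requires the localization-at-minimal-primes reduction together with a careful length computation over the Artinian local rings $S_{\mathfrak{p}}$. Once the injectivity of $A$ and $B$ over $S$ from (1) is in hand, the remaining manipulations in (2)--(5) are routine.
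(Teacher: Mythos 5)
Your proof is correct and follows essentially the same route as the paper's: injectivity of $A$ from $BA=xE$ and annihilation of $\cok A$ from $AB=xE$, production of $B$ in (2) by lifting multiplication by $x$ through the presentation and cancelling the injective $A$ to get $BA=xE$, exactness of the two-periodic complex in (3) by a cancellation argument (you cancel $A$ where the paper cancels the regular element $x$ --- equivalent), and Auslander--Buchsbaum/depth counting for (4) and (5). The only real difference is that you spell out the $m=n$ step in (2) via localization at minimal primes and a length count, where the paper simply asserts that $M$ has rank $0$; both amount to the same computation.
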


\begin{proof}
(1) There is an exact sequence $S^n\xrightarrow{A}S^n\to\cok A\to0$ of $S$-modules.
As $BA=xE_n$ and $x$ is $S$-regular, it is seen that the map $A:S^n\to S^n$ is injective, or in other words, $\ker A=0$.
Since $AB=xE_n$, it is observed that $x$ annihilates $\cok A$.
Hence $\cok A$ belongs to $\mf(x)$.

(2) By definition, $x$ kills $M$ and there is an exact sequence $0\to S^n\xrightarrow{A}S^m\to M\to0$ of $S$-modules.
As $x$ is $S$-regular, we see that $M$ has rank $0$ as an $S$-module, which implies $m=n$.
Since $xM=0$, as we see in the commutative diagram below with exact rows, there is an $n\times n$ matrix $B$ such that $AB=xE_n$.
$$
\xymatrix@R-.5pc@C+2pc{
0\ar[r] & S^n\ar[r]^A\ar[d]_x & S^n\ar[r]\ar[d]_x\ar@{.>}[ld]_B\ar[rd]_0 & M\ar[r]\ar[d]_x^0 & 0\\
0\ar[r] & S^n\ar[r]^A & S^n\ar[r] & M\ar[r] & 0
}
$$
The above diagram also says that $BA=xE_n$, and the assertion follows.

(3) The equality $AB=xE_n$ implies that the sequence $(S/(x))^n\xrightarrow{B}(S/(x))^n\xrightarrow{A}(S/(x))^n$ is a complex.
Let $z\in S^n$ be an element whose residue class $\overline z\in(S/(x))^n$ satisfies $A\overline z=0$.
Then $Az\in xS^n$, and we have $xz=BAz\in BxS^n=xBS^n$.
Since $x$ is an $S$-regular element, $z$ belongs to $BS^n$.
Hence the sequence $(S/(x))^n\xrightarrow{B}(S/(x))^n\xrightarrow{A}(S/(x))^n$ is exact.
A symmetric argument shows that the sequence $(S/(x))^n\xrightarrow{A}(S/(x))^n\xrightarrow{B}(S/(x))^n$ is also exact.
Thus we obtain an exact sequence
$$
\cdots\xrightarrow{A}(S/(x))^n\xrightarrow{B}(S/(x))^n\xrightarrow{A}(S/(x))^n\xrightarrow{B}\cdots.
$$
Applying the transpose $\t\,(-)$ to the equalities $AB=BA=xE_n$ of matrices, we get the equality $\t A\,\t B=\t B\,\t A=xE_n$.
Hence the sequence
$$
\cdots\xrightarrow{\t A}(S/(x))^n\xrightarrow{\t B}(S/(x))^n\xrightarrow{\t A}(S/(x))^n\xrightarrow{\t B}\cdots
$$
is exact as well, which is nothing but the $S/(x)$-dual of the previous exact sequence.

(4) Let $M\in\mf(x)$.
Then $M$ is a module over $S/(x)$, and has projective dimension at most one as a module over $S$.
Using the Auslander--Buchsbaum formula, we get $\depth M\ge\depth S-1=\dim S-1=\dim S/(x)$.
It follows that $M$ is a maximal Cohen--Macaulay $S/(x)$-module.

(5) Let $M\in\cm(S/(x))$.
Then $\depth M\ge\dim S/(x)=\dim S-1=\depth S-1$.
Since $S$ is regular, $M$ has finite projective dimension.
Hence $\pd_SM=\depth S-\depth M\le1$, and there is an exact sequence $0\to S^n\to S^m\to M\to0$.
Thus $\cm(S/(x))\subseteq\mf(x)$.
The opposite inclusion follows from (4).
\end{proof}

In the next proposition, we verify that the operation $-\ast-$ satisfies the associativity.
Thanks to this proposition, we may use the notation $\X_1\ast\X_2\ast\cdots\ast\X_n$ without caring about any confusion, where $I_1,\dots,I_n$ are ideals of $S$ and $\X_1,\dots,\X_n$ are subcategories of $\mod S/I_1,\dots,\mod S/I_n$ respectively.

\begin{prop}\label{8}
Let $\X,\Y,\Z$ be subcategories of $\mod S/I,\mod S/J,\mod S/K$ respectively.
Then there is an equality $(\X\ast\Y)\ast\Z=\X\ast(\Y\ast\Z)$ of subcategories of $\mod S/IJK$.
\end{prop}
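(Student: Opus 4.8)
The plan is to regard every module in sight as a finitely generated $S$-module and to read the operation $\ast$ as a one-step filtration inside $\mod S$. The first thing I would record is the elementary observation that, given an exact sequence $0\to X\to M\to Y\to0$ of $S$-modules with $IX=0$ and $JY=0$, one automatically has $(IJ)M=0$: indeed $JM\subseteq X$ because $Y=M/X$ is killed by $J$, and then $IJM\subseteq IX=0$. This shows that $\X\ast\Y$ really does land in $\mod S/IJ$ and, more importantly, that the various intermediate quotients produced below will live over the correct quotient rings.

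For the inclusion $(\X\ast\Y)\ast\Z\subseteq\X\ast(\Y\ast\Z)$, I would start from $M\in(\X\ast\Y)\ast\Z$, so there is an exact sequence $0\to W\to M\to Z\to0$ with $W\in\X\ast\Y$ and $Z\in\Z$, and in turn $0\to X\to W\to Y\to0$ with $X\in\X$ and $Y\in\Y$. Viewing $X\subseteq W\subseteq M$, I would pass to the quotient by $X$: the two short exact sequences assemble into $0\to W/X\to M/X\to M/W\to0$, that is, $0\to Y\to M/X\to Z\to0$. By the opening observation $M/X$ is an $S/JK$-module, and this sequence witnesses $M/X\in\Y\ast\Z$. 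Then $0\to X\to M\to M/X\to0$ exhibits $M$ as an element of $\X\ast(\Y\ast\Z)$.

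For the reverse inclusion I would run the same filtration argument in the other direction. Given $M\in\X\ast(\Y\ast\Z)$, fix $0\to X\to M\xrightarrow{\pi}V\to0$ with $X\in\X$ and $V\in\Y\ast\Z$, together with $0\to Y\to V\to Z\to0$ where $Y\in\Y$ and $Z\in\Z$. Setting $W=\pi^{-1}(Y)$, the map $\pi$ restricts to a surjection $W\to Y$ with kernel $X$ and induces an isomorphism $M/W\cong V/Y\cong Z$. Thus $0\to X\to W\to Y\to0$ shows $W\in\X\ast\Y$ (and $W\in\mod S/IJ$ by the opening observation), while $0\to W\to M\to Z\to0$ shows $M\in(\X\ast\Y)\ast\Z$.

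The argument is entirely formal, built from the isomorphism theorems, so there is no genuinely hard step; the only point demanding care is the bookkeeping of annihilators, namely checking that each intermediate module ($M/X$ in one direction, $W$ in the other) is annihilated by the expected product of ideals so that it is a legitimate object of the relevant category $\mod S/JK$ or $\mod S/IJ$. The opening observation handles exactly this, which is why I would isolate it first.
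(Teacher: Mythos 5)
Your proof is correct and is essentially the same as the paper's: your module $M/X$ is precisely the pushout $L$ appearing in the paper's diagram, and your preimage $W=\pi^{-1}(Y)$ is the pullback used in the paper's ``dual argument'' for the reverse inclusion. The explicit check that the intermediate modules are annihilated by the appropriate products of ideals is a worthwhile addition that the paper leaves implicit.
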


\begin{proof}
Let $M$ be an $S/IJK$-module.
Suppose that $M$ belongs to $(\X\ast\Y)\ast\Z$.
Then there is an exact sequence $0\to N\to M\to Z\to0$ such that $N\in\X\ast\Y$ and $Z\in\Z$.
Hence there is an exact sequence $0\to X\to N\to Y\to0$ with $X\in\X$ and $Y\in\Y$.
We make a pushout diagram:
$$
\xymatrix@R-1pc@C+2pc{
& & 0\ar[d] & 0\ar[d] \\
0\ar[r] & X\ar[r]\ar@{=}[d] & N\ar[r]\ar[d] & Y\ar[r]\ar[d] & 0 \\
0\ar[r] & X\ar[r] & M\ar[r]\ar[d] & L\ar[r]\ar[d] & 0 \\
& & Z\ar@{=}[r]\ar[d] & Z\ar[d] \\
& & 0 & 0
}
$$
The second column shows that $L$ is in $\Y\ast\Z$.
The second row implies that $M$ belongs to $\X\ast(\Y\ast\Z)$.
Thus, the inclusion $(\X\ast\Y)\ast\Z\subseteq\X\ast(\Y\ast\Z)$ follows.
The opposite inclusion is proved by a dual argument.
\end{proof}

From now on, we establish a couple of lemmas to prove our main results.

\begin{lem}\label{3}
Let $x_1,\dots,x_n\in S$ with $n\ge1$.
Let $M$ be an $S$-module, and let $0=M_0\subseteq M_1\subseteq\cdots\subseteq M_n=M$ be a filtration of $S$-submodules of $M$.
For each $1\le i\le n$, let $A_i$ be a presentation matrix of the $S$-module $M_i/M_{i-1}$, and assume $x_i(M_i/M_{i-1})=0$.
If $x_i$ is $S$-regular and $\ker A_i=0$ for all $2\le i\le n$, then there exists an exact sequence of the form
$$
0 \to \bigoplus_{i=1}^n\cok(x_1\cdots x_{i-1}A_i) \to M\oplus(S/(x_1\cdots x_n))^p \to \bigoplus_{i=2}^n(S/(x_i\cdots x_n))^{p_i} \to 0.
$$
\end{lem}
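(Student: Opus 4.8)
The plan is to prove the statement by a presentation–matrix reduction. \emph{Setup.} First I would extract the matrix data the hypotheses provide. Since $x_i(M_i/M_{i-1})=0$, every generator of $M_i/M_{i-1}=\cok A_i$ is annihilated by $x_i$, so $x_iS^{a_i}\subseteq\im A_i$ and there is a matrix $C_i$ with $A_iC_i=x_iE$. For $2\le i\le n$ the extra hypotheses $\ker A_i=0$ and $x_i$ being $S$-regular force $A_i$ to be square: tensoring $0\to S^{b_i}\xrightarrow{A_i}S^{a_i}\to\cok A_i\to0$ with the total quotient ring of $S$ kills $\cok A_i$, so $a_i=b_i=:n_i$. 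Hence $\cok A_i\in\mf(x_i)$, and the argument of Proposition \ref{7}(2) supplies a square matrix $B_i$ with $A_iB_i=B_iA_i=x_iE$; in particular I may take $C_i=B_i$ for $i\ge2$, so that for these indices I have both a left and a right inverse of $A_i$ up to the factor $x_i$.

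\emph{Reduction to a matrix equivalence.} Applying the horseshoe lemma to the sequences $0\to M_{i-1}\to M_i\to\cok A_i\to0$ produces a block upper triangular presentation matrix $\Phi=(Q_{jk})$ of $M$ with diagonal blocks $Q_{jj}=A_j$ and strictly upper blocks $Q_{jk}$ ($j<k$) recording the extensions. Put $c_i=x_1\cdots x_{i-1}$ (so $c_1=1$) and $N=\sum_{i=2}^n n_i$. The asserted sequence will follow once I establish an equivalence by invertible row and column operations
\[
\begin{pmatrix}\Phi&0\\0&x_1\cdots x_nE_N\end{pmatrix}\sim\begin{pmatrix}\Psi&\Gamma\\0&\Xi\end{pmatrix},\qquad\Psi=\operatorname{diag}(c_1A_1,\dots,c_nA_n),\quad\Xi=\operatorname{diag}(x_2\cdots x_nE_{n_2},\dots,x_nE_{n_n}).
\]
Indeed the left-hand matrix presents $M\oplus(S/(x_1\cdots x_n))^N$, while a block upper triangular presentation $\left(\begin{smallmatrix}\Psi&\Gamma\\0&\Xi\end{smallmatrix}\right)$ always yields the exact sequence $0\to\cok\Psi\to\cok\left(\begin{smallmatrix}\Psi&\Gamma\\0&\Xi\end{smallmatrix}\right)\to\cok\Xi\to0$. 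Reading off the three cokernels gives exactly the sequence of the lemma, with $p=N$ and $p_i=n_i$.

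\emph{The engine.} The core computation is a two–block identity: for $2\le i\le n$, using $A_iB_i=B_iA_i=x_iE$, right multiplication by $\left(\begin{smallmatrix}E&c_iE\\0&E\end{smallmatrix}\right)$, left multiplication by $\left(\begin{smallmatrix}E&0\\-x_{i+1}\cdots x_nB_i&E\end{smallmatrix}\right)$, and a swap of the two block columns transform
\[
\begin{pmatrix}A_i&0\\0&x_1\cdots x_nE\end{pmatrix}\longrightarrow\begin{pmatrix}c_iA_i&A_i\\0&x_i\cdots x_nE\end{pmatrix}
\]
(up to sign). Carried out on the full augmented matrix, with the free block of size $n_i$ in the role of the second column, these operations rescale the diagonal block $A_i$ to $c_iA_i$, install $x_i\cdots x_nE_{n_i}$ in the $\Xi$-slot, and replace each super-diagonal entry $Q_{ji}$ ($j<i$) by $c_iQ_{ji}$. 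Since $x_j\mid c_i$ whenever $j<i$, every such entry is now divisible by $x_j$ and can be removed by a column operation built from $A_jC_j=x_jE$, the removed data migrating into the columns of $\Gamma$. Performing this for $i=n,n-1,\dots,2$ should produce the block diagonal $\Psi$ and $\Xi$ demanded above.

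\emph{Main obstacle.} The whole difficulty lies in the bookkeeping. Clearing the block in position $(j,k)$ feeds correction terms into the positions $(m,k)$ with $m<j$, and the engine's row operations similarly create cross terms between distinct free blocks; both threaten the block diagonality of $\Psi$ and of $\Xi$. I would tame this by fixing the order of operations — treating the diagonal blocks in the order $i=n,n-1,\dots,2$, and within each column clearing the entries from the one nearest the diagonal upward — and by carrying a divisibility invariant: the block in position $(m,k)$ remains a multiple of $x_1\cdots x_m$, while the free cross term feeding row $\mathrm{free}_i$ from $\mathrm{free}_k$ remains a multiple of $x_k\cdots x_n$. The first property holds initially because rescaling contributes the factor $c_k=x_1\cdots x_{k-1}$, the second because the engine's row multiplier carries the factor $x_{i+1}\cdots x_n$; each is preserved by the subsequent operations since the multipliers furnished by $A_jC_j=x_jE$ and by the diagonal entry $x_k\cdots x_nE$ of $\Xi$ carry exactly the complementary factors. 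Together they guarantee that every entry created is divisible by the element needed to clear it, that clearing moves the data harmlessly into $\Gamma$, and that the process terminates with $\Psi$ and $\Xi$ block diagonal. Establishing this invariant rigorously is the main technical point; granting it, the exact sequence is read off immediately from the block triangular presentation.
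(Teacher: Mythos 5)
Your proposal is correct and is essentially the paper's own argument run in reverse: the paper writes down the augmented block matrix $C=\left(\begin{smallmatrix}\Psi&\Gamma\\0&\Xi\end{smallmatrix}\right)$ explicitly and reduces it to $\left(\begin{smallmatrix}\Phi&0\\0&0\end{smallmatrix}\right)$ over $S/(x_1\cdots x_n)$ by exactly your ``engine'' (the relations $A_iB_i=B_iA_i=x_iE$ driving the row/column operations) followed by the same snake-lemma finish, and the bookkeeping you defer to a divisibility invariant is precisely the paper's explicit chain of seven matrix equivalences, which confirms that your order of operations and invariants do go through. One small repair: a block upper triangular presentation does \emph{not} ``always'' yield the short exact sequence $0\to\cok\Psi\to\cok\left(\begin{smallmatrix}\Psi&\Gamma\\0&\Xi\end{smallmatrix}\right)\to\cok\Xi\to0$ (left-exactness can fail); you need $\ker\Xi=0$, which holds here because $x_2,\dots,x_n$ are $S$-regular --- the paper states this explicitly (``note that the map $F$ is injective'').
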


\begin{proof}
The assertion is easy to check for $n=1$.
Let $n\ge2$.
For each $2\le i\le n$, the element $x_i$ is regular and annihilates $M_i/M_{i-1}$, whence the $S$-module $M_i/M_{i-1}$ has rank $0$.
There are exact sequences
$$
S^{q_1}\xrightarrow{A_1}S^{p_1}\to M_1\to0,\qquad
0 \to S^{p_i} \xrightarrow{A_i} S^{p_i} \to M_i/M_{i-1} \to 0\quad(2\le i\le n).
$$
The multiplications by $x_1,\dots,x_n$ induce the chain maps below.
Since $x_i(M_i/M_{i-1})=0$ for all $1\le i\le n$, similarly as in the proof of Proposition \ref{7}(2) and as explained in the diagram below, there exist matrices $B_1,B_2,\dots,B_n$ such that $A_1B_1=x_1E$ and $A_iB_i=B_iA_i=x_iE$ for all $2\le i\le n$.
$$
\xymatrix@R-.5pc@C+1pc{
S^{q_1}\ar[r]^{A_1}\ar[d]_{x_1} & S^{p_1}\ar[r]\ar[d]_{x_1}\ar@{.>}[ld]_{B_1} & M_1\ar[r]\ar[d]_{x_1}^0 & 0\\
S^{q_1}\ar[r]^{A_1} & S^{p_1}\ar[r] & M_1\ar[r] & 0
}
\qquad
\xymatrix@R-.5pc@C+1pc{
0\ar[r] & S^{p_i}\ar[r]^{A_i}\ar[d]_{x_i} & S^{p_i}\ar[r]\ar[d]_{x_i}\ar@{.>}[ld]_{B_i} & M_i/M_{i-1}\ar[r]\ar[d]_{x_i}^0 & 0\\
0\ar[r] & S^{p_i}\ar[r]^{A_i} & S^{p_i}\ar[r] & M_i/M_{i-1}\ar[r] & 0
}
$$
A repeated application of the horseshoe lemma gives an exact sequence
$$
S^{q_1}\oplus S^{p_2}\oplus\cdots\oplus S^{p_n}\xrightarrow{A}S^{p_1}\oplus S^{p_2}\oplus\cdots\oplus S^{p_n}\to M\to0
$$
of $S$-module, where $A=\left(\begin{smallmatrix}
A_1&A_{12}&\cdots&A_{1n}\\
&A_2&\cdots&A_{2n}\vspace{-5pt}\\
&&{\tiny\ddots}&{\tiny\vdots}\\
&&&A_n
\end{smallmatrix}\right)$.
There are equivalences of matrices over $S/(x_1\cdots x_n)$.
\begin{align*}
C:=&\left(\begin{smallmatrix}
A_1&&&&&A_{12}&A_{13}&\cdots&A_{1,n-1}&A_{1n}\\
&x_1A_2&&&&A_2&A_{23}&\cdots&A_{2,n-1}&A_{2n}\vspace{-5pt}\\
&&{\tiny\ddots}&&&&&&{\tiny\vdots}&{\tiny\vdots}\\
&&&x_1\cdots x_{n-2}A_{n-1}&&&&&A_{n-1}&A_{n-1,n}\\
&&&&x_1\cdots x_{n-1}A_n&&&&&A_n\\
&&&&&x_2\cdots x_nE\\
&&&&&&x_3\cdots x_nE\vspace{-5pt}\\
&&&&&&&{\tiny\ddots}\\
&&&&&&&&x_{n-1}x_nE\\
&&&&&&&&&x_nE
\end{smallmatrix}\right)\\
\cong
&\left(\begin{smallmatrix}
A_1&&&&-x_1\cdots x_{n-1}A_{1n}&A_{12}&A_{13}&\cdots&A_{1,n-1}&A_{1n}\\
&x_1A_2&&&-x_1\cdots x_{n-1}A_{2n}&A_2&A_{23}&\cdots&A_{2,n-1}&A_{2n}\vspace{-5pt}\\
&&{\tiny\ddots}&&{\tiny\vdots}&&&&{\tiny\vdots}&{\tiny\vdots}\\
&&&x_1\cdots x_{n-2}A_{n-1}&-x_1\cdots x_{n-1}A_{n-1,n}&&&&A_{n-1}&A_{n-1,n}\\
&&&&0&&&&&A_n\\
&&&&&x_2\cdots x_nE\\
&&&&&&x_3\cdots x_nE\vspace{-5pt}\\
&&&&&&&{\tiny\ddots}\\
&&&&&&&&x_{n-1}x_nE\\
&&&&&&&&&x_nE
\end{smallmatrix}\right)\\
\cong
&\left(\begin{smallmatrix}
A_1&&&&0&A_{12}&A_{13}&\cdots&A_{1,n-1}&A_{1n}\\
&x_1A_2&&&0&A_2&A_{23}&\cdots&A_{2,n-1}&A_{2n}\vspace{-5pt}\\
&&{\tiny\ddots}&&{\tiny\vdots}&&&&{\tiny\vdots}&{\tiny\vdots}\\
&&&x_1\cdots x_{n-2}A_{n-1}&0&&&&A_{n-1}&A_{n-1,n}\\
&&&&0&&&&&A_n\\
&&&&&x_2\cdots x_nE\\
&&&&&&x_3\cdots x_nE\vspace{-5pt}\\
&&&&&&&{\tiny\ddots}\\
&&&&&&&&x_{n-1}x_nE\\
&&&&&&&&&x_nE
\end{smallmatrix}\right)\\
\cong
&\left(\begin{smallmatrix}
A_1&&&&&A_{12}&A_{13}&\cdots&A_{1,n-1}&A_{1n}\\
&0&&&&A_2&A_{23}&\cdots&A_{2,n-1}&A_{2n}\vspace{-5pt}\\
&&{\tiny\ddots}&&&&&&{\tiny\vdots}&{\tiny\vdots}\\
&&&0&&&&&A_{n-1}&A_{n-1,n}\\
&&&&0&&&&&A_n\\
&&&&&x_2\cdots x_nE\\
&&&&&&x_3\cdots x_nE\vspace{-5pt}\\
&&&&&&&{\tiny\ddots}\\
&&&&&&&&x_{n-1}x_nE\\
&&&&&&&&&x_nE
\end{smallmatrix}\right)\\
\cong&
\left(\begin{smallmatrix}
A_1&&&&&A_{12}&A_{13}&\cdots&A_{1,n-1}&A_{1n}\\
&0&&&&A_2&A_{23}&\cdots&A_{2,n-1}&A_{2n}\vspace{-5pt}\\
&&{\tiny\ddots}&&&&&&{\tiny\vdots}&{\tiny\vdots}\\
&&&0&&&&&A_{n-1}&A_{n-1,n}\\
&&&&0&&&&&A_n\\
&&&&&0&-x_3\cdots x_nB_2A_{23}&\cdots&-x_3\cdots x_nB_2A_{2,n-1}&-x_3\cdots x_nB_2A_{2n}\\
&&&&&&x_3\cdots x_nE\vspace{-5pt}\\
&&&&&&&{\tiny\ddots}\\
&&&&&&&&x_{n-1}x_nE\\
&&&&&&&&&x_nE
\end{smallmatrix}\right)\\
\cong&
\left(\begin{smallmatrix}
A_1&&&&&A_{12}&A_{13}&\cdots&A_{1,n-1}&A_{1n}\\
&0&&&&A_2&A_{23}&\cdots&A_{2,n-1}&A_{2n}\vspace{-5pt}\\
&&{\tiny\ddots}&&&&&&{\tiny\vdots}&{\tiny\vdots}\\
&&&0&&&&&A_{n-1}&A_{n-1,n}\\
&&&&0&&&&&A_n\\
&&&&&0&0&\cdots&0&0\\
&&&&&&x_3\cdots x_nE\vspace{-5pt}\\
&&&&&&&{\tiny\ddots}\\
&&&&&&&&x_{n-1}x_nE\\
&&&&&&&&&x_nE
\end{smallmatrix}\right)
\cong
\left(\begin{smallmatrix}
A_1&&&&&A_{12}&A_{13}&\cdots&A_{1,n-1}&A_{1n}\\
&0&&&&A_2&A_{23}&\cdots&A_{2,n-1}&A_{2n}\vspace{-5pt}\\
&&{\tiny\ddots}&&&&&&{\tiny\vdots}&{\tiny\vdots}\\
&&&0&&&&&A_{n-1}&A_{n-1,n}\\
&&&&0&&&&&A_n\\
&&&&&0\\
&&&&&&0\vspace{-5pt}\\
&&&&&&&{\tiny\ddots}\\
&&&&&&&&0\\
&&&&&&&&&0
\end{smallmatrix}\right)
\cong
\begin{pmatrix}
A&0\\
0&0
\end{pmatrix}.
\end{align*}
Here, the first equivalence follows from multiplying the last (i.e. $(2n-1)$st) block column by $-x_1\cdots x_{n-1}E$ and adding it to the $n$th block column; note that $x_1\cdots x_n=0$ in $S/(x_1\cdots x_n)$.
The second equivalence is obtained by multiplying the $i$th block column by $B_ix_{i+1}\cdots x_{n-1}A_{in}$ from the right and adding it to the $n$th block column for each $1\le i\le n-1$.
Iteraing this procedure on the $(2n-1)$st and $n$th block columns for the $(2n-1-i)$th and $(n-i)$th block columns with $1\le i\le n-2$, we get the third equivalence.
The fourth equivalence follows from multiplying the $2$nd block row by $-B_2x_3\cdots x_nE$ from the left and adding it to the $(n+1)$st block row.
The fifth equivalence is obtained by multiplying the $(n+i)$th block row by $B_2x_3\cdots x_iA_{2,i+1}$ from the left and adding it to the $(n+1)$st block row for each $2\le i\le n-1$.
Iteraing this procedure on the $2$nd and $(n+1)$st block rows for the $i$th and $(n+i-1)$st block rows with $3\le i\le n$, we get the sixth equivalence.
Replacing block columns gives the final seventh equivalence.

By assumption, $x_ix_{i+1}\cdots x_n$ is a regular element for $2\le i\le n$.
There is a commutative diagram
$$
\xymatrix{
0\ar[r] & S^{q_1+p_2+\cdots+p_n}\ar[r]\ar[d]_D & S^{q_1+p_2+\cdots+p_n}\oplus S^{p_2+\cdots+p_n} \ar[r]\ar[d]_C & S^{p_2+\cdots+p_n}\ar[r]\ar@{^(->}[d]_F & 0\\
0\ar[r] & S^{p_1+p_2+\cdots+p_n}\ar[r] & S^{p_1+p_2+\cdots+p_n}\oplus S^{p_2+\cdots+p_n}\ar[r] & S^{p_2+\cdots+p_n}\ar[r] & 0
}
$$
with exact rows, where $D=A_1\oplus x_1A_2\oplus\cdots\oplus(x_1\cdots x_{n-1})A_n$ and $F=(x_2\cdots x_n)E_{p_2}\oplus(x_3\cdots x_n)E_{p_3}\oplus\cdots\oplus x_{n-1}x_nE_{p_{n-1}}\oplus x_nE_{p_n}$; note that the map $F$ is injective.
The snake lemma yields an exact sequence
$$\textstyle
0 \to \bigoplus_{i=1}^n\cok(x_1\cdots x_{i-1}A_i) \to M\oplus(S/(x_1\cdots x_n))^p \to \bigoplus_{i=2}^n(S/(x_i\cdots x_n))^{p_i} \to 0,
$$
where we set $p=p_2+\cdots+p_n$.
Thus the proof of the lemma is completed.
\end{proof}

To state the next two lemmas, we need to recall some notation.
Let $\X,\Y$ be subcategories of $\mod R$.
Let $M$ be an $R$-module, and let $r$ be a positive integer.

\begin{enumerate}[(a)]
\item
The {\em additive closure} $\add\X$ of $\X$ is by definition the subcategory of $\mod R$ consisting of direct summands of finite direct sums of objects in $\X$.
We put $|\X|=\add\X$ and $|M|=|\{M\}|$.
\item
We denote by $[\X]$ the additive closure of the subcategory of $\mod R$ consisting of $R$ and all modules of the form $\syz^iX$, where $i\ge0$ and $X\in\X$.
We set $[M]=[\{M\}]$.
\item
We denote by $\X\circ\Y$ the subcategory of $\mod R$ consisting of the $R$-modules $E$ appearing in exact sequences of the form $0 \to X \to E \to Y \to 0$ with $X\in\X$ and $Y\in\Y$.
\item
We define
$$
{[\X]}_r=
\begin{cases}
[\X] & (r=1),\\
[{[\X]}_{r-1}\circ[\X]] & (r\ge2).
\end{cases}
\qquad
{|\X|}_r=
\begin{cases}
|\X| & (r=1),\\
|{|\X|}_{r-1}\circ|\X|| & (r\ge2).
\end{cases}
$$
\end{enumerate}
We write ${[\X]}^R,{[\X]}_r^R,{|\X|}^R,{|\X|}_r^R$ to specify the ground ring.
We set $[M]_r={[\{M\}]}_r$ and ${|M|}_r={|\{M\}|}_r$.

The following elementary remark is necessary in the proof of the first lemma.

\begin{rem}\label{9}
Let $F,G$ be $S$-modules, and let $M,N$ be submodules of $F,G$ respectively.
Let $x$ be an element of $S$.
Suppose that there is a commutative diagram of $S$-modules in the lower left whose vertical arrows are isomorphisms and horizontal arrows are inclusion maps.
Then one has a commutative diagram in the lower right, which induces an isomorphism $F/xM\cong G/xN$.
$$
\xymatrix@R-.5pc@C+2pc{
M\ar[r]^\inc\ar[d]^\cong & F\ar[d]^\cong\\
N\ar[r]^\inc & G
}
\qquad\qquad
\xymatrix@R-.5pc@C+2pc{
xM\ar[r]^\inc\ar[d]^\cong & xF\ar[d]^\cong\ar[r]^\inc & F\ar[d]^\cong\\
xN\ar[r]^\inc & xG\ar[r]^\inc & G
}
$$
\end{rem}

Now we can state those two lemmas.

\begin{lem}\label{4}
Let $A$ be a matrix over $S$.
\begin{enumerate}[\rm(1)]
\item
Let $B$ be a matrix over $S$, and let $x\in S$.
If $\cok B$ is a direct summand of $\cok A$, then $\cok(xB)$ is a direct summand of $\cok(xA)\oplus(S/(x))^n$ for some $n\ge0$.
\item
Let $x,y\in S$.
If $y\cdot\cok A=0$, then $xy\cdot\cok(xA)=0$.
In other words, if $\cok A$ is an $S/(y)$-module, then $\cok(xA)$ is an $S/(xy)$-module.
\item
Let $y\in S$.
Assume that $\ker A=y\cdot\cok A=0$.
Then the following hold.
\begin{enumerate}[\rm(a)]
\item
There exists a matrix $B$ over $S$ such that $AB=BA=yE$.
\item
Let $B$ be a matrix as in {\rm(a)}.
Suppose that $y$ is an $S$-regular element.
Let $C$ be a matrix over $S$, and let $x\in S$ be an $S$-regular element.
If $\cok C\in{[\cok A]}_r^{S/(y)}$ for some integer $r>0$, then there is a containment $\cok(xC)\in{[\cok(xA)\oplus\cok(xB)\oplus S/(x)]}_r^{S/(xy)}$.
\end{enumerate}
\end{enumerate}
\end{lem}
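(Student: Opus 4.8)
The plan is to dispatch the first three assertions quickly and to spend the effort on (3)(b). For (1) I would use the calculus of presentation matrices together with Remark~\ref{9}: writing $\cok A\cong\cok B\oplus L$ and choosing a presentation matrix $B'$ of $L$, the matrices $A$ and $B\oplus B'$ present isomorphic modules, hence are connected by a sequence of elementary transformations --- invertible row and column operations, insertion or deletion of zero columns, and insertion or deletion of $1\times1$ identity blocks. Remark~\ref{9} shows that the first two kinds leave $\cok(x(-))$ invariant up to isomorphism, while an identity block only changes it by a copy of $\cok(x\cdot(1))=S/(x)$; comparing the two ends of the chain shows that $\cok(xB)$ is a direct summand of $\cok(xA)\oplus(S/(x))^{s}$ for some $s$. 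For (2) I would use the exact sequence $0\to\im A/x\im A\to\cok(xA)\to\cok A\to0$ arising from $\im(xA)=x\im A$: its submodule is annihilated by $x$ and its quotient by $y$, so $xy$ annihilates $\cok(xA)$. For (3)(a) I would imitate the proof of Proposition~\ref{7}(2): as $y\cok A=0$, multiplication by $y$ is the zero map on $\cok A$ and lifts through the presentation to a matrix $B$ with $AB=yE$, and $BA=yE$ follows from $\ker A=0$; when $y$ is regular, inverting regular elements forces $A$ to be square, so $(A,B)$ is a genuine matrix factorization of $y$.

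The substance is (3)(b), which I would prove by induction on $r$. Write $\Y=\cok(xA)\oplus\cok(xB)\oplus S/(x)$ and let $\Phi$ denote the operation $\cok D\mapsto\cok(xD)$ on presented modules. Two properties of $\Phi$ drive the proof. First, $\Phi$ sends direct summands to direct summands up to free $S/(x)$-summands --- this is exactly (1) --- which lets me replace the given presentation of a module in ${[\cok A]}_r^{S/(y)}$ by any convenient one. Second, $\Phi$ respects extensions: if $\left(\begin{smallmatrix}C'&Q\\0&C''\end{smallmatrix}\right)$ is a block-triangular presentation produced by the horseshoe lemma from a short exact sequence, so that $0\to\cok C'\to\cok\left(\begin{smallmatrix}C'&Q\\0&C''\end{smallmatrix}\right)\to\cok C''\to0$ is exact --- equivalently $Q(\ker C'')\subseteq\im C'$ --- then, because $x$ is regular, $\ker(xC'')=\ker C''$ and $\im(xC')=x\im C'$, so the inclusion persists after multiplication by $x$ and $\Phi$ carries the sequence to the exact sequence $0\to\cok(xC')\to\cok(x\left(\begin{smallmatrix}C'&Q\\0&C''\end{smallmatrix}\right))\to\cok(xC'')\to0$.

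For the base case $r=1$, Proposition~\ref{7}(3) applied to $AB=BA=yE$ over $S/(y)$ gives $\syz_{S/(y)}\cok A=\cok B$ and $\syz_{S/(y)}\cok B=\cok A$, whence ${[\cok A]}^{S/(y)}=\add\{S/(y),\cok A,\cok B\}$; presenting a module here by a block-diagonal sum of copies of $(y)$, $A$, $B$ and applying $\Phi$ yields a direct sum of copies of $S/(xy)$, $\cok(xA)$, $\cok(xB)$, which lies in ${[\Y]}^{S/(xy)}$, and (1) then places $\cok(xC)$ there. For the inductive step, $\cok C$ is a direct summand of a finite direct sum of modules of the form $S/(y)$ or $\syz^{j}_{S/(y)}G$ with $G\in{[\cok A]}_{r-1}^{S/(y)}\circ{[\cok A]}^{S/(y)}$, so by the summand property it suffices to place each such generator, suitably presented, into ${[\Y]}_r^{S/(xy)}$. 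Copies of $S/(y)$ go to $S/(xy)$. For $\syz^{j}_{S/(y)}G$, I would choose a short exact sequence $0\to G'\to G\to G''\to0$ with $G'\in{[\cok A]}_{r-1}^{S/(y)}$ and $G''\in{[\cok A]}^{S/(y)}$, apply the horseshoe lemma to get a block-triangular presentation of $\syz^{j}_{S/(y)}G$ realizing $0\to\syz^{j}_{S/(y)}G'\to\syz^{j}_{S/(y)}G\to\syz^{j}_{S/(y)}G''\to0$, and invoke that each ${[\cok A]}_{s}^{S/(y)}$ is closed under syzygy. The induction hypothesis and the base case then send the two ends under $\Phi$ into ${[\Y]}_{r-1}^{S/(xy)}$ and ${[\Y]}^{S/(xy)}$, and the extension property places $\Phi(\syz^{j}_{S/(y)}G)$ in ${[\Y]}_{r-1}^{S/(xy)}\circ{[\Y]}^{S/(xy)}\subseteq{[\Y]}_r^{S/(xy)}$.

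The step I expect to be the main obstacle is the syzygy reduction just described. Multiplication of a presentation matrix by $x$ does \emph{not} commute with passing to syzygies over the quotient rings --- for example $\syz_{S/(xy)}\cok(xA)=\cok B$ rather than $\cok(xB)$ --- so one cannot push $\Phi$ through $\syz_{S/(y)}$ directly. The point is to route the syzygy through the extension structure that $\Phi$ does respect: distributing $\syz^{j}_{S/(y)}$ over the defining short exact sequence of $G$ keeps the two ends inside the syzygy-closed categories ${[\cok A]}_{r-1}^{S/(y)}$ and ${[\cok A]}^{S/(y)}$, where the induction hypothesis and base case apply, and the horseshoe presentation then reassembles $\Phi(\syz^{j}_{S/(y)}G)$ inside ${[\Y]}_r^{S/(xy)}$.
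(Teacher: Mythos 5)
Your proof of the key part (3)(b) follows essentially the same route as the paper: the same induction on $r$, the same base case via Proposition \ref{7}(3) identifying ${[\cok A]}_1^{S/(y)}$ with $\add\{\cok A,\cok B,S/(y)\}$, and the same crucial observation that $S$-regularity of $x$ gives $\ker(xC'')=\ker C''$ and $\im(xC')=x\im C'$, so that the connecting map still vanishes and multiplying a block-triangular horseshoe presentation by $x$ again presents an extension. Your (2) and (3)(a) also match the paper up to cosmetic rephrasing. The one genuine divergence is in (1): where the paper builds an explicit pullback of the two free covers and applies Remark \ref{9} to the resulting pair of isomorphic submodule inclusions, you appeal to the classical fact that presentation matrices of isomorphic modules become equivalent after stabilizing by identity blocks and zero columns. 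That fact is true and your deduction from it (row/column operations commute with multiplication by $x$; an identity block contributes only an $S/(x)$ summand) is correct, but be aware that this stable-equivalence statement is precisely the nontrivial content of (1) --- the paper's pullback diagram is in effect a proof of the instance it needs --- so you should either supply that argument or give a reference rather than assert the "chain of elementary transformations." In the inductive step of (3)(b) you unwind the definition of ${[\X]}_r$ directly (summands of sums of syzygies of extensions, using that each ${[\cok A]}_s^{S/(y)}$ is syzygy-closed) where the paper instead invokes \cite[Proposition 2.2(1)]{radius} to reduce to a single short exact sequence; both are legitimate, the citation just being the cleaner packaging of the same reduction.
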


\begin{proof}
(1) There is an isomorphism $\cok A\cong\cok B\oplus M$ of $S$-modules.
Let $C$ be a presentation matrix of the $S$-module $M$.
Then we have isomorphisms $\cok A\cong\cok B\oplus\cok C\cong\cok\left(\begin{smallmatrix}B&0\\0&C\end{smallmatrix}\right)$.
Note that $\cok(xB)$ is a direct summand of $\cok\left(x\left(\begin{smallmatrix}B&0\\0&C\end{smallmatrix}\right)\right)$.
Replacing $B$ with $\left(\begin{smallmatrix}B&0\\0&C\end{smallmatrix}\right)$, we may assume that $\cok A\cong\cok B$.
There are exact sequences $F_1\xrightarrow{A}F_0\to N\to0$ and $G_1\xrightarrow{B}G_0\to N\to0$ of $S$-modules with $F_1,F_0,G_1,G_0$ free.
Consider the pullback diagram
$$
\xymatrix@R-.5pc@C+2pc{
&&0&0\\
0\ar[r] & \im A\ar[r]^{\inc} & F_0\ar[r]\ar[u] & N\ar[r]\ar[u] & 0\\
0\ar[r] & \im A\ar@{=}[u]\ar[r]^a & X\ar[u]^f\ar[r]^g & G_0\ar[u]\ar[r] & 0\\
&&\im B\ar[u]^b\ar@{=}[r] & \im B\ar[u]^{\inc}\\
&&0\ar[u]&{\phantom{.}}0.\ar[u]
}
$$
Since $F_0,G_0$ are projective $S$-modules, there are $S$-homomorphisms $s:F_0\to X$ and $t:G_0\to X$ such that the compositions $fs$ and $gt$ are the identity maps.
We have a commutative diagram
$$
\xymatrix@R-.5pc@C+2pc{
\im A\oplus G_0\ar@{=}[r]\ar[d]_\inc^{\left(\begin{smallmatrix}\inc&0\\0&1\end{smallmatrix}\right)} & \im A\oplus G_0\ar[r]^-{(a,t)}_-\cong\ar[d]^{\left(\begin{smallmatrix}\inc&ft\\0&1\end{smallmatrix}\right)} & X\ar[d]^{\left(\begin{smallmatrix}f\\g\end{smallmatrix}\right)} & F_0\oplus\im B\ar[l]_-{(s,b)}^-\cong\ar[d]^{\left(\begin{smallmatrix}1&0\\gs&\inc\end{smallmatrix}\right)} & F_0\oplus\im B\ar@{=}[l]\ar[d]_\inc^{\left(\begin{smallmatrix}1&0\\0&\inc\end{smallmatrix}\right)}\\
F_0\oplus G_0\ar[r]^{\left(\begin{smallmatrix}1&ft\\0&1\end{smallmatrix}\right)}_\cong & F_0\oplus G_0\ar@{=}[r] & F_0\oplus G_0 & F_0\oplus G_0\ar@{=}[l] & F_0\oplus G_0\ar[l]_{\left(\begin{smallmatrix}1&0\\gs&1\end{smallmatrix}\right)}^\cong
}
$$
such that the horizontal maps are isomorphisms.
Remark \ref{9} implies $(F_0\oplus G_0)/x(\im A\oplus G_0)\cong(F_0\oplus G_0)/x(F_0\oplus\im B)$, which shows $\cok(xA)\oplus G_0/xG_0\cong F_0/xF_0\oplus\cok(xB)$.
The assertion now follows.

(2) Let $A$ have $m$ rows.
Then $\cok A=S^m/\im A$ and $\cok(xA)=S^m/\im(xA)$.
The equalities $y\cdot\cok A=0$ and $xy\cdot\cok(xA)=0$ are equivalent to the inclusions $yS^m\subseteq\im A$ and $xyS^m\subseteq\im(xA)$, respectively.
As $x\cdot\im A=\im(xA)$, the first inclusion implies the second.

(3)(a) The assertion is shown similarly to Proposition \ref{7}(2).

(b) Since $y$ is $S$-regular and kills $\cok A$, it is seen that $A,B$ are square matrices of the same size.
We use induction on $r$.
Let $r=1$.
It follows from Proposition \ref{7}(3) that $B$ is a presentation matrix of $\syz_{S/(y)}(\cok A)$ and there is an isomorphism $\syz_{S/(y)}^2(\cok A)\cong\cok A$.
Hence
$$
\cok C\in{[\cok A]}_1^{S/(y)}
={|\{\cok A,\,S/(y),\,\syz_{S/(y)}(\cok A)\}|}^{S/(y)}
={|\{\cok A,\,\cok(y),\,\cok B\}|}^{S/(y)}.
$$
Applying (1) and (2), we observe that $\cok(xC)\in{|\{\cok(xA),\,\cok(xy),\,\cok(xB),S/(x)\}|}^{S/(xy)}$.
Since $\cok(xy)=S/(xy)$, we have $\cok(xC)\in[\cok(xA)\oplus\cok(xB)\oplus S/(x)]_1^{S/(xy)}$.

Now let $r\ge2$.
Then there exists an exact sequence $0\to X\to Y\to Z\to0$ of $S/(y)$-modules with $X\in{[\cok A]}_{r-1}^{S/(y)}$ and $Z\in{[\cok A]}_1^{S/(y)}$ such that $\cok C$ is a direct summand of $Y$ (see \cite[Proposition 2.2(1)]{radius}).
Take presentation matrices $D,F$ of $X,Z$ over $S$, respectively.
The horseshoe lemma yields the commutative diagram in the lower left with exact rows and columns, where $G$ is a matrix of the form $\left(\begin{smallmatrix}D&H\\0&F\end{smallmatrix}\right)$.
This induces the commutative diagram in the lower right with exact rows and columns.
It follows from (2) that $X'=\cok(xD)$, $Y'=\cok(xG)$ and $Z'=\cok(xF)$ are modules over $S/(xy)$.
$$
\xymatrix@R-.5pc@C+.7pc{
0\ar[r] & S^a\ar[r]\ar[r]\ar[d]_D & S^{a+b}\ar[r]\ar[d]_G & S^b\ar[r]\ar[d]_F & 0\\
0\ar[r] & S^c\ar[r]\ar[r]\ar[d] & S^{c+d}\ar[r]\ar[d] & S^d\ar[r]\ar[d] & 0\\
0\ar[r] & X\ar[r]\ar[r]\ar[d] & Y\ar[r]\ar[d] & Z\ar[r]\ar[d] & 0\\
&0&0&0
}\qquad
\xymatrix@R-.5pc@C+.7pc{
0\ar[r] & S^a\ar[r]\ar[r]\ar[d]_{xD} & S^{a+b}\ar[r]\ar[d]_{xG} & S^b\ar[r]\ar[d]_{xF} & 0\\
0\ar[r] & S^c\ar[r]\ar[r]\ar[d] & S^{c+d}\ar[r]\ar[d] & S^d\ar[r]\ar[d] & 0\\
& X'\ar[r]\ar[r]\ar[d] & Y'\ar[r]\ar[d] & Z'\ar[r]\ar[d] & 0\\
&0&0&0
}
$$
Take any element $z\in\ker(xF)$.
The assumption that $x$ is $S$-regular implies $z\in\ker F$.
The left diagram shows that the map $\ker F\to\cok D=X$ induced by the snake lemma is zero, which implies $Hz\in\im D$.
Hence $(xH)z\in\im(xD)$, which shows that the map $\ker(xF)\to\cok(xD)=X'$ induced by the snake lemma is zero.
This gives rise to an exact sequence $0\to X'\to Y'\to Z'\to0$ of $S/(xy)$-modules.
Applying the induction hypothesis, we obtain the containments $X'\in{[\cok(xA)\oplus\cok(xB)\oplus S/(x)]}_{r-1}^{S/(xy)}$ and $Z'\in{[\cok(xA)\oplus\cok(xB)\oplus S/(x)]}_1^{S/(xy)}$, while $\cok(xC)$ is a direct summand of $Y'\oplus(S/(x))^n$ for some $n\ge0$ by (1).
Considering the exact sequence $0\to X'\oplus(S/(x))^n\to Y'\oplus(S/(x))^n\to Z'\to0$, we see that $\cok(xC)\in{[\cok(xA)\oplus\cok(xB)\oplus S/(x)]}_r^{S/(xy)}$.
\end{proof}

\begin{lem}\label{5}
Let $x\in S$ be an $S$-regular element, and let $M\in\mf(x)$.
Then for each integer $n>0$ one has an equality ${[M]}_n^{S/(x)}={|M\oplus S/(x)\oplus\syz_{S/(x)}M|}_n^{S/(x)}$.
\end{lem}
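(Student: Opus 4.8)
The plan is to reduce the whole statement to a single structural property of the right-hand side: writing $N=M\oplus S/(x)\oplus\syz_{S/(x)}M$ for brevity, I want to show that the additive closure ${|N|}^{S/(x)}$, together with all of its higher $\circ$-powers ${|N|}_r^{S/(x)}$, is closed under taking syzygies over $S/(x)$. Once this is known, the outer brackets occurring in the definition of ${[M]}_n^{S/(x)}$ become redundant and the two sides collapse onto each other.

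First I would settle the case $n=1$, which is the only place the matrix-factorization hypothesis is used. By Proposition \ref{7}(2),(3), the module $M\in\mf(x)$ satisfies $\syz_{S/(x)}^2M\cong M$, so over $S/(x)$ its syzygies are $2$-periodic and the set $\{\syz_{S/(x)}^iM\mid i\ge0\}$ equals $\{M,\syz_{S/(x)}M\}$ up to isomorphism and projective (i.e.\ $S/(x)$-free) summands. Since ${[M]}^{S/(x)}$ is by definition the additive closure of $S/(x)$ together with all of these syzygies, it collapses to ${|N|}^{S/(x)}$; this is exactly the base case ${[M]}_1^{S/(x)}={|N|}_1^{S/(x)}$.

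The heart of the argument is the claim that ${|N|}_r^{S/(x)}$ is closed under $\syz_{S/(x)}$ for every $r\ge1$, which I would prove by induction on $r$. For $r=1$ the periodicity gives $\syz_{S/(x)}N\cong\syz_{S/(x)}M\oplus M$ up to projective summands, using $\syz_{S/(x)}(S/(x))=0$ and $\syz_{S/(x)}^2M\cong M$; as $S/(x)$ supplies the projective summands, $\syz_{S/(x)}N\in|N|$, and since $\syz_{S/(x)}$ commutes with finite direct sums and passes to direct summands up to projectives, $\syz_{S/(x)}X\in|N|$ for every $X\in|N|$. For the inductive step, take $W\in{|N|}_{r-1}^{S/(x)}\circ|N|$ sitting in $0\to X\to W\to Z\to0$ with $X\in{|N|}_{r-1}^{S/(x)}$ and $Z\in|N|$. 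The horseshoe lemma produces syzygies fitting into $0\to\syz_{S/(x)}X\to\syz_{S/(x)}W\to\syz_{S/(x)}Z\to0$; the outer terms lie in ${|N|}_{r-1}^{S/(x)}$ and $|N|$ by the inductive hypothesis and the case $r=1$, so $\syz_{S/(x)}W\in{|N|}_{r-1}^{S/(x)}\circ|N|\subseteq{|N|}_r^{S/(x)}$. Any other choice of syzygy differs by a projective summand, which again lies in ${|N|}_r^{S/(x)}$ because $S/(x)\in|N|\subseteq{|N|}_r^{S/(x)}$.

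Finally I would run the induction on $n$. For $n\ge2$, feeding the inductive hypothesis ${[M]}_{n-1}^{S/(x)}={|N|}_{n-1}^{S/(x)}$ and the base case ${[M]}^{S/(x)}={|N|}^{S/(x)}$ into the definition yields ${[M]}_n^{S/(x)}=[{|N|}_{n-1}^{S/(x)}\circ|N|]$, so it only remains to strip the outer bracket, i.e.\ to check $[{|N|}_{n-1}^{S/(x)}\circ|N|]=|{|N|}_{n-1}^{S/(x)}\circ|N||={|N|}_n^{S/(x)}$. This is precisely where the syzygy-closure claim pays off: the bracket $[\Z]$ enlarges $\add\Z$ only by adjoining $S/(x)$ and the syzygies of objects of $\Z$, and by the claim the additive closure $\add\Z={|N|}_n^{S/(x)}$ already contains both. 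I expect the main obstacle to be the horseshoe-lemma bookkeeping inside the syzygy-closure claim, namely verifying that the short exact sequence of syzygies genuinely lands in the correct $\circ$-power and that the projective-summand ambiguity of $\syz_{S/(x)}$ is absorbed by the presence of the free module $S/(x)$; by contrast, the $n=1$ periodicity computation is the only point at which the special structure of $M\in\mf(x)$ is invoked.
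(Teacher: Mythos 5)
Your proof is correct, and it shares the paper's base case (the collapse ${[M]}_1^{S/(x)}={|M\oplus S/(x)\oplus\syz_{S/(x)}M|}_1^{S/(x)}$ via the periodicity $\syz_{S/(x)}^2M\cong M$ from Proposition \ref{7}), but it handles the inductive step by a genuinely different mechanism. The paper splits the equality into two inclusions: the containment ${|M\oplus S/(x)\oplus\syz_{S/(x)}M|}_n^{S/(x)}\subseteq{[M]}_n^{S/(x)}$ follows from general facts (namely $|\Y|_n\subseteq[\Y]_n$ and ${[[M]]}_n={[M]}_n$), while for the reverse inclusion it invokes \cite[Proposition 2.2(1)]{radius}, which exhibits any object of ${[M]}_n$ as a direct summand of the middle term of an extension of an object of ${[M]}_1$ by an object of ${[M]}_{n-1}$, after which the induction closes immediately. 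You instead prove from scratch that ${|N|}_r^{S/(x)}$, with $N=M\oplus S/(x)\oplus\syz_{S/(x)}M$, is closed under $\syz_{S/(x)}$ for every $r$ (horseshoe lemma, with the projective-summand ambiguity absorbed by $S/(x)\in|N|$), and then observe that the outer bracket in ${[M]}_n=[{|N|}_{n-1}\circ|N|]$ adjoins only $S/(x)$ and syzygies, hence nothing new. In effect you re-derive, in this special setting, precisely the content of the decomposition result the paper imports from \cite{radius}: your argument is more self-contained but longer, the paper's is shorter at the cost of an external reference. The only point worth tightening is to state explicitly, in the inductive step of your syzygy-closure claim, that direct summands of objects of ${|N|}_{r-1}\circ|N|$ (not just those objects themselves) have syzygies in ${|N|}_r$; this follows exactly as in your base case, since $\syz_{S/(x)}$ respects direct summands up to projective summands and ${|N|}_r$ contains $S/(x)$ and is additively closed.
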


\begin{proof}
Set $\X:={|M\oplus S/(x)\oplus\syz_{S/(x)}M|}^{S/(x)}$.
There is an isomorphism $\syz_{S/(x)}^2M\cong M$ by (2) and (3) of Proposition \ref{7}.
It is observed that $\X={[M]}^{S/(x)}$, and hence
$$
|M\oplus S/(x)\oplus\syz_{S/(x)}M|_n^{S/(x)}
=|\X|_n^{S/(x)}
=|[M]^{S/(x)}|_n^{S/(x)}
\subseteq[[M]^{S/(x)}]_n^{S/(x)}
=[M]_n^{S/(x)}.
$$
Now, pick any $N\in{[M]}_n^{S/(x)}$.
Let us show the containment $N\in{|M\oplus S/(x)\oplus\syz_{S/(x)}M|}_n^{S/(x)}$ by induction on $n$.
The equality $\X={[M]}^{S/(x)}$ given above settles the case $n=1$.
Let $n\ge2$.
Then there exists an exact sequence $0\to A\to B\to C\to0$ of $S/(x)$-modules with $A\in{[M]}_{n-1}^{S/(x)}$ and $C\in{[M]}_1^{S/(x)}$ such that $N$ is a direct summand of $B$.
The induction hypothesis implies $A\in{|M\oplus S/(x)\oplus\syz_{S/(x)}M|}_{n-1}^{S/(x)}$ and $C\in{|M\oplus S/(x)\oplus\syz_{S/(x)}M|}_1^{S/(x)}$.
It follows that $N$ is in ${|M\oplus S/(x)\oplus\syz_{S/(x)}M|}_n^{S/(x)}$, as desired.
\end{proof}

Let $\X$ be a subcategory of $\mod R$.
The {\em dimension} (resp. {\em radius}) of $\X$, denoted by $\dim\X$ (resp. $\radius\X$), is defined to be the infimum of integers $n\ge0$ with $\X={[G]}_{n+1}$ (resp. $\X\subseteq{[G]}_{n+1}$) for some $G\in\mod R$.
Now we can give a proof of our main theorem.

\begin{proof}[\bf Proof of Theorem \ref{0}]
Suppose that $\mf(x_i)={[G_i]}_{d_i+1}^{S/(x_i)}$ for each $1\le i\le n$, where $G_i\in\mf(x_i)$ and $d_i\ge0$.
The assertions (1) and (2) of Proposition \ref{7} imply that for each $1\le i\le n$ there exist square matrices $P_i,Q_i$ such that $P_iQ_i=Q_iP_i=x_iE$, $\ker P_i=\ker Q_i=0$ and $\cok P_i\cong G_i$.
We set
\begin{align*}
H_i&=\cok(x_1\cdots x_{i-1}P_i)\oplus\cok(x_1\cdots x_{i-1}Q_i)\oplus S/(x_1\cdots x_{i-1}),\\
K_i&=H_i\oplus S/(x_1\cdots x_i)\oplus \syz_{S/(x_1\cdots x_i)}H_i.
\end{align*}
Using Lemma \ref{4}(2), we easily check that $H_i$ belongs to $\mf(x_1\cdots x_i)$, and Lemma \ref{5} gives rise to an equality ${[H_i]}_{d_i+1}^{S/(x_1\cdots x_i)}={|K_i|}_{d_i+1}^{S/(x_1\cdots x_i)}$ for all $1\le i\le n$.

Let $M\in\mf(x_1)\ast\cdots\ast\mf(x_n)$.
Put $T_n=M$.
There exist exact sequences
$$
0\to T_i\xrightarrow{f_i} T_{i+1}\to V_i\to0\qquad(1\le i\le n-1)
$$
of $S$-modules with $T_i\in\mf(x_1)\ast\cdots\ast\mf(x_i)$ and $V_i\in\mf(x_{i+1})$.
Setting $M_i=\im(f_{n-1}f_{n-2}\cdots f_{i+1}f_i)\cong T_i$ for each $1\le i\le n-1$, we get a filtration $0=:M_0\subseteq M_1\subseteq\cdots\subseteq M_n:=M$ of $S$-submodules of $M$ such that $M_i/M_{i-1}\cong V_{i-1}\in\mf(x_i)$ for $1\le i\le n$, where $V_0:=M_1\cong T_1\in\mf(x_1)$.
Let $A_i$ be a presentation matrix of $M_i/M_{i-1}$ such that $\ker A_i=0$ for $1\le i\le n$.
By Lemma \ref{3}, we obtain an exact sequence
$$\textstyle
0 \to \bigoplus_{i=1}^n\cok(x_1\cdots x_{i-1}A_i) \to M\oplus(S/(x_1\cdots x_n))^p \to \bigoplus_{i=2}^n(S/(x_i\cdots x_n))^{p_i} \to 0.
$$
As $\cok A_i\in\mf(x_i)={[G_i]}_{d_i+1}^{S/(x_i)}$ for $1\le i\le n$, Lemma \ref{4}(3) implies $\cok(x_1\cdots x_{i-1}A_i)\in{|K_i|}_{d_i+1}^{S/(x_1\cdots x_i)}$ for $1\le i\le n$.
We see that $\bigoplus_{i=1}^n\cok(x_1\cdots x_{i-1}A_i)$ is in ${|\bigoplus_{i=1}^nK_i|}_{d+1}^{S/(x_1\cdots x_n)}$, where $d=\max\{d_1,\dots,d_n\}$.
The above short exact sequence shows $M\in{|\bigoplus_{i=1}^nK_i\oplus\bigoplus_{i=2}^nS/(x_i\cdots x_n)|}_{d+2}^{S/(x_1\cdots x_n)}$.
We conclude that the subcategory $\mf(x_1)\ast\cdots\ast\mf(x_n)$ of $\mod S/(x_1\cdots x_n)$ has radius at most $d+1$.
\end{proof}

\begin{rem}
The above proof of Theorem \ref{0} actually shows the stronger inequality
$$
\size(\mf(x_1)\ast\cdots\ast\mf(x_n))\le\sup\{\dim\mf(x_1),\dots,\dim\mf(x_n)\}+1.
$$
Here, the {\em size} of a subcategory $\X$ of $\mod R$, denoted by $\size\X$, has been introduced in \cite{radius}, which is by definition the infimum of integers $n\ge0$ such that $\X\subseteq{|G|}_{n+1}$ for some $G\in\mod R$.
\end{rem}

For a Cohen--Macaulay local ring $R$ we denote by $\lcm(R)$ the stable category of maximal Cohen--Macaulay $R$-modules, that is, the ideal quotient of the additive category $\cm(R)$ by free modules.
If $R$ is Gorenstein, then $\lcm(R)$ is a triangulated category (see \cite{B}), and the dimension of $\lcm(R)$ in the sense of Rouquier is defined.
For the definition of the dimension of a triangulated category, we refer the reader to \cite{R}.
To show our corollaries, we establish one more lemma.

\begin{lem}\label{6}
Let $R$ be a local hypersurface.
Then there are equalities
$$
\dim\ds(R)=\dim\lcm(R)=\dim\cm(R)=\radius\cm(R).
$$
\end{lem}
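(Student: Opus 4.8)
The plan is to split the chain of equalities into three comparisons and glue them with the Buchweitz equivalence. First I would record that a local hypersurface is Gorenstein, so by \cite{B} there is a triangle equivalence $\ds(R)\simeq\lcm(R)$; since Rouquier's dimension is invariant under triangle equivalences, this gives $\dim\ds(R)=\dim\lcm(R)$ at once, and it lets me work entirely inside $\lcm(R)$, the stable category of the Frobenius exact category $\cm(R)$ whose projective--injective objects are the free modules. The inequality $\radius\cm(R)\le\dim\cm(R)$ is free from the definitions, since the condition $\cm(R)=[G]_{n+1}$ defining the dimension is stronger than the condition $\cm(R)\subseteq[G]_{n+1}$ defining the radius. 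Hence it suffices to prove $\dim\cm(R)\le\dim\lcm(R)$ and $\dim\lcm(R)\le\radius\cm(R)$, which together with the trivial inequality close the cycle $\radius\cm(R)\le\dim\cm(R)\le\dim\lcm(R)\le\radius\cm(R)$ and force all four quantities to coincide.

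The technical heart is a dictionary between triangulated generation in $\lcm(R)$ and the operation $[-]_r$ in $\cm(R)$. Two standard facts drive it. First, because $\cm(R)$ is a Frobenius category, every triangle in $\lcm(R)$ is isomorphic to one induced by a short exact sequence $0\to X\to Y\to Z\to0$ of maximal Cohen--Macaulay modules (modulo free summands), so a single cone in the sense of Rouquier corresponds to a single extension layer in the sense of the operation $\circ$, while the free modules---which are the zero objects of $\lcm(R)$---are precisely the copies of $R$ absorbed by the bracket $[-]$. Second, because $R$ is a hypersurface, the syzygy functor on $\lcm(R)$ is $2$-periodic (cf.\ Proposition~\ref{7}(3)); hence the suspension $\syz^{-1}$ agrees with $\syz$, every shift of a module is realised by a nonnegative syzygy, and no genuine cosyzygies are needed. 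Under this dictionary the $(n+1)$-fold thickening $\langle\overline G\rangle_{n+1}$ of \cite{R} matches the subcategory $[G]_{n+1}$ up to free summands, with the $+1$ index conventions on the two sides agreeing.

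With the dictionary in hand I would treat the two remaining inequalities symmetrically. For $\dim\lcm(R)\le\radius\cm(R)$, start from $G$ with $\cm(R)\subseteq[G]_{n+1}^{R}$, where $n=\radius\cm(R)$, and let $G''=\syz_R^{t}G$ be a high syzygy, which is maximal Cohen--Macaulay, so $\overline{G''}$ is an object of $\lcm(R)$; applying the stabilization functor to the bracket, each $\syz^{i}G$ becomes a shift of $\overline{G''}$ by periodicity and $R$ becomes zero, so the image of $[G]_{n+1}^{R}$ lies in $\langle\overline{G''}\rangle_{n+1}$, and the containment $\cm(R)\subseteq[G]_{n+1}^{R}$ forces $\lcm(R)=\langle\overline{G''}\rangle_{n+1}$, that is, $\dim\lcm(R)\le n$. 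For $\dim\cm(R)\le\dim\lcm(R)$, start from a maximal Cohen--Macaulay generator $G$ with $\lcm(R)=\langle\overline G\rangle_{n+1}$, $n=\dim\lcm(R)$, and set $G'=G\oplus R$; lifting each triangle to a short exact sequence of maximal Cohen--Macaulay modules and each shift to a syzygy yields $M\in[G']_{n+1}^{R}$ for every $M\in\cm(R)$, the extra summand $R$ accounting for the free modules invisible stably. Since $\cm(R)$ is resolving---closed under syzygies, extensions and direct summands and containing $R$---one has $[G']_{n+1}^{R}\subseteq\cm(R)$, so in fact $\cm(R)=[G']_{n+1}^{R}$ and $\dim\cm(R)\le n$; this simultaneously upgrades the containment to an equality, which is exactly what distinguishes dimension from radius.

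The step I expect to be the main obstacle is making the dictionary precise enough to preserve the step count exactly: one must verify that direct summands, the bracket's built-in copy of $R$, and the shift-versus-syzygy identification interact so that $\langle-\rangle_{n+1}$ and $[-]_{n+1}$ track each other layer by layer, rather than merely up to a bounded additive error. The periodicity $\syz^{2}\cong\mathrm{id}$ is what keeps this bookkeeping finite and lossless; without it, for a general Gorenstein ring, one would have to contend with genuine cosyzygies and the comparison would degrade into one-sided inequalities.
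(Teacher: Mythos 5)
Your argument is correct, and its first step coincides exactly with the paper's: both invoke Buchweitz's equivalence $\ds(R)\simeq\lcm(R)$ for the Gorenstein ring $R$ and the invariance of Rouquier's dimension under triangle equivalences to get $\dim\ds(R)=\dim\lcm(R)$. Where you diverge is in the remaining equalities $\dim\lcm(R)=\dim\cm(R)=\radius\cm(R)$: the paper disposes of these in one line by citing \cite[Proposition 3.5(3)]{dim}, whereas you reconstruct the proof of that cited proposition via the cycle $\radius\cm(R)\le\dim\cm(R)\le\dim\lcm(R)\le\radius\cm(R)$. Your reconstruction is essentially the standard argument underlying the citation: the trivial inequality from the definitions, the lifting of triangles in the Frobenius stable category to short exact sequences of maximal Cohen--Macaulay modules with free modules absorbed by the bracket's built-in copy of $R$, the passage to a high syzygy to replace an arbitrary radius-generator by a maximal Cohen--Macaulay one, the resolving property of $\cm(R)$ to upgrade a containment to the equality required by the definition of dimension, and --- crucially --- the stable $2$-periodicity $\syz^2\cong\mathrm{id}$ over a hypersurface, which is what makes the dictionary between $\langle-\rangle_{n+1}$ and $[-]_{n+1}$ track layer by layer without losing cosyzygies. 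What each version buys is clear: the paper's proof is shorter and defers the bookkeeping to the literature; yours is self-contained and makes explicit exactly where the hypersurface hypothesis enters, namely that without periodicity one would only obtain one-sided inequalities, which is precisely why \cite[Proposition 3.5(3)]{dim} is stated for hypersurfaces.
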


\begin{proof}
Since $R$ is a Gorenstein ring of finite Krull dimension, by virtue of \cite[Theorem 4.4.1]{B} there is an equivalence $\ds(R)\cong\lcm(R)$ as triangulated categories.
Hence it holds that $\dim\ds(R)=\dim\lcm(R)$.
As $R$ is a hypersurface, we have $\dim\lcm(R)=\dim\cm(R)=\radius\cm(R)$ by \cite[Proposition 3.5(3)]{dim}.
\end{proof}

Recall that a Cohen--Macaulay local ring $R$ is said to have {\em finite CM-representation type} if there exist only finitely many isomorphism classes of indecomposable maximal Cohen--Macaulay $R$-modules.
When this is the case, it is clear from the definition that $\dim\cm(R)=0$.
Now let us prove our corollaries.

\begin{proof}[\bf Proof of Corollary \ref{1}]
We begin with proving the first assertion of the corollary.
According to Lemma \ref{6}, it suffices to show that
$$
\radius\cm(S/(x_1\cdots x_n))\le\sup\{\dim\cm(S/(x_1)),\dots,\dim\cm(S/(x_n))\}+1.
$$
Fix an integer $1\le i\le n$.
Proposition \ref{7}(5) implies $\cm(S/(x_i))=\mf(x_i)$.
Let $M\in\cm(S/(x_1\cdots x_n))$.
Setting $M_i=(0:_Mx_1\cdots x_i)$, we have a filtration $0=M_0\subseteq\cdots\subseteq M_n=M$ of $S$-submodules of $M$, and $M_i/M_{i-1}$ is an $S/(x_i)$-module.
Note that there is an isomorphism $M_i/M_{i-1}\to(0:_{x_1\cdots x_{i-1}M}x_i)$ given by $\overline{z}\mapsto x_1\cdots x_{i-1}z$ for $z\in M_i$.
The target is a submodule of $M$, and hence it has positive depth.
As the ring $S/(x_i)$ has dimension one, the $S/(x_i)$-module $M_i/M_{i-1}$ is maximal Cohen--Macaulay, that is, $M_i/M_{i-1}\in\cm(S/(x_i))=\mf(x_i)$.
It follows that $M$ belongs to $\mf(x_1)\ast\cdots\ast\mf(x_n)$.
Applying Theorem \ref{0} completes the proof of the first assertion of the corollary.

To show the second assertion of the corollary, suppose that $S/(x_i)$ has finite CM-representation type for all $1\le i\le n$.
Then by Lemma \ref{6} we have $\dim\ds(S/(x_i))=\dim\cm(S/(x_i))=0$ for all $1\le i\le n$.
The first assertion of the corollary implies that $\dim\ds(S/(x_1\cdots x_n))\le1$.
\end{proof}

\begin{proof}[\bf Proof of Corollary \ref{2}]
The inequality $\dim\ds(R)\le1$ is a direct consequence of Corollary \ref{1} and Lemma \ref{6}.
Let $S=\C[\![x,y]\!]$ be a formal power series ring.
For each $f\in S$, the hypersurface $A=S/(f)$ has finite CM-representation type if and only if $f$ belongs to $\P$ after changing variables; see \cite[Theorem (8.10) and Corollary (9.3)]{Y}.
Lemma \ref{6} implies $\dim\ds(A)=\dim\cm(A)$.
Since $A$ is henselian, $\dim\cm(A)=0$ if and only if $A$ has finite CM-representation type by \cite[Proposition 3.7(1)]{dim}.
In conclusion, one has $\dim\ds(R)=0$ if and only if $R\cong S/(f)$ for some $f\in\P$.
The contradiction of this statement is nothing but the assertion of the corollary.
\end{proof}



\end{document}